\begin{document}

\title{ISS with Respect to Boundary and In-domain Disturbances for a Coupled Beam-String System
}


\author{Jun Zheng        \and
        Hugo Lhachemi   \and
        \\
        Guchuan Zhu     \and
        David Saussi\'{e}
}

\institute{J. Zheng \at
            {}{School of Mathematics, Southwest Jiaotong University, Chengdu, Sichuan, China 611756} \\
              \email{zhengjun2014@aliyun.com}           
           \and
           H. Lhachemi, G Zhu, and D. Saussi\'{e} \at
           Department of Electrical Engineering, Polytechnique Montr\'{e}al, P.O. Box 6079, Station Centre-Ville, Montreal, QC, Canada H3T 1J4\\
           \email{\{hugo.lhachemi,guchuan.zhu,d.saussie\}@polymtl.ca}
}

\date{Received: date / Accepted: date}

\maketitle

\begin{abstract}
This paper addresses the robust stability of a boundary controlled system coupling two partial differential equations (PDEs), namely beam and string equations, in the presence of boundary and in-domain disturbances under the framework of input-to-state stability (ISS) theory. Well-posedness assessment is first carried out to determine the regularity of the disturbances required for guaranteeing the unique existence of the solution to the considered problem. Then, the method of Lyapunov functionals is applied in stability analysis, which results in the establishment of some ISS properties {}{with respect to} disturbances. As the analysis is based on the \emph{a priori} estimates of the solution to the PDEs, it allows avoiding the invocation of unbounded operators while obtaining the ISS gains in their original expression without involving the derivatives of boundary disturbances.
\end{abstract}
\keywords{Coupled partial differential equations; Boundary and in-domain disturbances; ISS.}

\section{Introduction}\label{Sec: Introduction}
This paper addresses the robust stabilization problem of a boundary controlled system described by a pair of coupled partial differential equations (PDEs) in the presence of boundary and in-domain disturbances. The considered system is a model describing the dynamics in bending and twisting displacement, respectively, for a flexible aircraft wing \cite{Lhachemi:2017}. This model is a linear version of the system presented in \cite{bialy2016adaptive}. A very similar model of a flapping wing UAV is studied in \cite{paranjape2013pde} and \cite{he2017control}. The robust stability analysis presented in this work is carried out in the framework of input-to-state stability (ISS), which was first introduced by Sontag (see \cite{Sontag:1989,Sontag:1990}) and has become one of the central concepts in the study of robust stability of control systems.

During the last two decades, a complete theory of ISS for nonlinear finite dimensional systems has been established and has been successfully applied to a very wide range of problems in nonlinear systems analysis and control (see, e.g., \cite{karafyllis2011stability}). In recent years, a considerable effort has been devoted to extending the ISS theory to infinite dimensional systems governed by partial differential equations, including the characterization of ISS and iISS (integral input-to-state stability, which is a variant of ISS \cite{Sontag:1998}) \cite{Dashkovskiy:2013, Jacob:2016, Jayawardhana:2008, Mironchenko:2016, Mironchenko:2014a, Mironchenko:2014b, Mironchenko:2014, Mironchenko:2015, Mironchenko:2016c, Mironchenko:2016b, Mironchenko:2017} and the establishment of ISS properties for different PDE systems {}{\cite{Argomedo:2012,Argomedo:2013, Dashkovskiy:2010, Dashkovskiy:2013b, jacob2016input, Karafyllis:2014, Karafyllis:2016a, Karafyllis:2016}, {}{\cite{karafyllis2017siam, Karafyllis:2018, Karafyllis:2018b}}, \cite{Logemann:2013, Mazenc:2011, Prieur:2012, Tanwani:2016_CDC,Tanwani:2017,Zheng:201702,Zheng:2017,Zheng:2018}}.

In the formulation of PDEs, disturbances can be distributed over the domain and/or appear at isolated points in the domain or on the boundaries. Usually, pointwise disturbances will lead to a formulation involving unbounded operators \cite{jacob2016input, Karafyllis:2016, Karafyllis:2016a, Mironchenko:2014b}, which is considered to be more challenging than the case of distributed disturbances \cite{Karafyllis:2016}. To avoid dealing with unbounded operators, it is proposed in \cite{Argomedo:2012} to transform the boundary disturbance to a distributed one, which allows for the application of the tools established for the latter case, in particular the method of Lyapunov functionals. However, it is pointed out in \cite{Karafyllis:2016, Karafyllis:2016a} that such a method will end up establishing the ISS property {}{with respect to} boundary disturbance and some of its time derivatives, which is not strictly in the original form of ISS formulation. For this reason, the authors of \cite{Karafyllis:2016, Karafyllis:2016a} {}{proposed a finite-difference scheme and eigenfunction expansion method with which the ISS
in $L^2$-norm and in weighted $L^\infty$-norm is derived directly from the estimates of the solution to the considered PDEs associated with a Sturm-Liouville operator.} Although the aforementioned transformation of the disturbance from the boundary to the domain is still used, it is only for the purpose of well-posedness assessment, while the ISS property is expressed solely in terms of disturbances as expected. Nevertheless, the {}{method} employed in \cite{Karafyllis:2016, Karafyllis:2016a} may involve a very heavy computation when dealing with higher-order, coupled PDEs with complex boundary conditions including disturbances, as the one considered in the present work.

{}{A monotonicity-based method has been introduced in \cite{Mironchenko:2017} for studying the ISS of nonlinear parabolic equations with boundary disturbances. It has been shown that with the monotonicity the ISS of the original nonlinear parabolic PDE with constant boundary disturbances is equivalent to the ISS of a closely related nonlinear parabolic PDE with constant distributed disturbances and zero boundary conditions. As an application of this method, the ISS properties in $L^p$-norm ({}{$\forall p>2$}) for some linear parabolic systems have been established.}

{}{It has been shown in \cite{Zheng:201702} and \cite{Zheng:2017} that the classical method of Lyapunov functionals is still effective in obtaining ISS properties w.r.t. boundary disturbances for certain semilinear parabolic PDEs with Dirichlet, and Neumann (or Robin) boundary conditions, respectively. In \cite{Zheng:201702}, the technique of De~Giorgi iteration is used when Lyapunov method is involved in the establishment of ISS for PDEs with Dirichlet boundary disturbances. ISS in $L^2$-norm for Burgers' equations, and ISS in $L^\infty$-norm for some linear PDEs, have been established in \cite{Zheng:201702}. In \cite{Zheng:2017}, some technical inequalities have been developed, which allows dealing directly with the boundary disturbances in proceeding on ISS in $L^2$-norm for certain semilinear PDEs with Neumann (or Robin) boundary conditions via Lyapunov method. In \cite{Tanwani:2016_CDC}, the ISS w.r.t. boundary disturbances in $H^1$-norm has also been established for linear hyperbolic {PDEs} using Lyapunov method.}

It should be noticed that it is shown in \cite{jacob2016input} that for a class of linear PDEs with boundary disturbances, {}{under the assumption that the semigroup is exponential stable, ISS is equivalent to iISS, with the aid of admissibility.} Nevertheless, this is a quite strong condition and there may be difficulties to apply this assertion to systems for which the associated operators are not \textit{a priori} dissipative, as dissipativity is a non-trivial property depending closely on, among other factors, the boundary conditions and the regularity of the disturbances.

The method adopted in the present work is also the application of Lyapunov theory in the establishment of the ISS and iISS properties of the considered system with respect to boundary and in-domain disturbances. However, greatly inspired by the methodology proposed in \cite{Karafyllis:2016, Karafyllis:2016a, Zheng:2017}, stability analysis is based on the \textit{a priori} estimates of the solution to the original PDEs, which allows avoiding the invocation of unbounded operators while obtaining the ISS and iISS properties expressed only in terms of the disturbances. The development of the solution consists in two steps. In the first step, we perform a well-posedness analysis to determine the regularity of the disturbances required for ensuring the existence of the solutions to the PDEs. Similar to \cite{Argomedo:2012,Karafyllis:2016, Karafyllis:2016a}, the technique of lifting is used in well-posedness analysis to avoid involving unbounded operators. In the second step, the ISS and iISS properties are established via the estimates of the solution to the original system. {}{Instead of dealing with certain energy functional directly, the Lyapunov functional candidate for the system is actually derived from the regularity analysis of the solutions.} {In general, a Lyapunov functional candidate may be chosen according to the norms of the solution and their derivatives arising in the computation of \textit{a priori} estimates of the solutions.}

Note that the result presented in this work demonstrates that {the appearance of the derivatives of boundary disturbances in ISS or iISS gains
is not necessarily inherent to the Lyapunov method and may be avoided for certain settings}. Therefore, we can expect that the well-established method of Lyapunov functionals can be applied to the establishment of ISS properties {}{with respect to} boundary disturbances for a wide range of PDEs. This constitutes the main contribution of the present work.

In the remainder of the paper, {}{
Section~\ref{Sec: Problem formulation} introduces the dynamic model of the coupled beam-string system and presents the well-posedness assessment. Section~\ref{Sec: Stability} is devoted to the analysis of ISS and iISS properties of the considered system. Numerical simulation results for the considered system are presented in Section~\ref{Sec: Simulation}, followed by concluding remarks given in Section~\ref{Sec: Conclusion}.}

\section{{}{Problem formulation and Well-posedness Analysis}}\label{Sec: Problem formulation}

\subsection{{}{Notation}}\label{Sec: notation}
{}{Let $\mathbb{R}=(-\infty,+\infty), \mathbb{R}_{+}=(0,+\infty)$, and $\mathbb{R}_{\geq 0} =  \{0\}\cup \mathbb{R}_{+}$.} {}{We define some function spaces for functions with
one variable. For $a,b$$\in $$[-\infty,$$+\infty]$ and $p\in [1,+\infty)$, $L^p(a,b)$ is the space of all measurable functions $f$ whose absolute value raised to the $p^{\text{th}}$-power has a finite integral. The norm $\|\cdot\|$ on $L^p(a,b)$ is defined by $ \|f\|_{L^p(a,b)}=\left(\int_{a}^b|f(x)|^p\text{d}x\right)^{\frac{1}{p}}$. $L^{\infty}(a,b)$ is the space all measurable functions $f$ whose absolute value is essential bounded. The norm $\|\cdot\|$ on $L^\infty(a,b)$ is defined by  $\|f\|_{L^{\infty}(a,b)}=\text{ess}\sup\limits_{a< x<b}|f(x)|.$
For a positive integer $m$, $H^m(a,b)=H^m((a,b);\mathbb{R})=\{f:(a,b)\rightarrow\mathbb{R}
|\ f\in L^2(a,b)$ {with each $s$-th order} {weak derivative} {$D^s f\in L^2(a,b),\ s=1,2,\ldots,m\}$}.  For a nonnegative integer $m$, $C^m(\mathbb{R}_{\geq 0})=C^m(\mathbb{R}_{\geq 0};\mathbb{R})=\{f:\mathbb{R}_{\geq 0}\rightarrow \mathbb{R}|\frac{\text{d}^sf}{\text{d}x^s} (s=0,1,2,\ldots,m)$ exist and are continuous on $\mathbb{R}_{\geq 0}\}$.
}

{}{We define some function spaces for functions with two variables.
For $t\in \mathbb{R}_{\geq 0},l\in \mathbb{R}_{\geq 0}$ and $1\leq p<+\infty$, the space $L^{\infty}(0,t;L^p(0,l))$ consists of all strongly measurable functions $f:[0,t]\rightarrow L^p(0,l)$ with the norm
\begin{align*}
 \|f\|_{L^{\infty}(0,t;L^p(0,l))}= \text{ess}\sup\limits_{0< s< t}\|f(\cdot,s)\|_{L^{p}(0,l)} <+\infty.
\end{align*}
The space $L^{\infty}(0,t;L^\infty(0,l))$ consists of all strongly measurable functions $f:[0,t]\rightarrow L^\infty(0,l)$ with the norm
\begin{align*}
\|f\|_{L^{\infty}(0,t;L^\infty(0,l))} = \text{ess}\sup\limits_{0< s< t}\|f(\cdot,s)\|_{L^{\infty}(0,l)} <+\infty.
\end{align*}
For a nonnegative integer $m$ and a vector space $H$, $C^m(\mathbb{R}_{\geq 0};H)=\{f:\mathbb{R}_{\geq 0}\rightarrow H|\frac{\partial ^s f}{\partial t^s} (\cdot,t) \in H$, and $\frac{\partial ^s f}{\partial t^s} (\cdot,t)$ is continuous on $\mathbb{R}_{\geq 0},\ s=0,1,2,\ldots,m\}$.}

Some well-known function classes commonly used in Lyapunov-based stability analysis are specified below:
\newline $\mathcal {K}=\{\gamma : \mathbb{R}_{\geq 0} \rightarrow \mathbb{R}_{\geq 0}|\ \gamma(0)=0,\gamma$ is continuous, strictly increasing$\}$;
\newline $ \mathcal {K}_{\infty}=\{\theta \in \mathcal {K}|\ \lim\limits_{s\rightarrow\infty}\theta(s)=\infty\}$;
\newline $ \mathcal {L}=\{\gamma : \mathbb{R}_{\geq 0}\rightarrow \mathbb{R}_{\geq 0}|\ \gamma$ is continuous, strictly decreasing, $\lim\limits_{s\rightarrow\infty}\gamma(s)=0\}$;
\newline  $ \mathcal {K}\mathcal {L}=\{\beta : \mathbb{R}_{\geq 0}\times \mathbb{R}_{\geq 0}\rightarrow \mathbb{R}_{\geq 0}|\ \beta(\cdot,t)\in \mathcal {K}, \forall t \in \mathbb{R}_{\geq 0}$, and $\beta(s,\cdot)\in \mathcal {L}, \forall s \in {}{ \mathbb{R}_{+}}\}$.

\subsection{{}{System setting}}
Let $l\in \mathbb{R}_{\geq 0}$ be the length of the wing. Denote by $w(y,t): [0,l]\times \mathbb{R}_{\geq 0} \rightarrow \mathbb{R}$ and $\phi(y,t): [0,l]\times \mathbb{R}_{\geq 0} \rightarrow \mathbb{R}$ the bending and twisting displacements, respectively, at the location $y \in [0, l]$ along the wing span and at time $t \geq 0$. In the present work, we consider the dynamics of a flexible aircraft wing expressed by the following initial-boundary value problem (IBVP) representing a coupled beam-string system with boundary control \cite{Lhachemi:2017}:
\begin{subequations}\label{system 1426}
\begin{align}
  &w_{tt}+ (a_1 w_{yy} + b_1 w_{tyy})_{yy} =c_1 \phi + p_1 \phi_t + q_1 w_t + d_{1},\label{1426a}\\
  &\phi_{tt}- (a_2 \phi_{y} + b_2\phi_{ty})_{y} = c_2 \phi + p_2 \phi_t + q_2 w_t + d_{2},\label{1426b}\\
  \begin{split}\label{1426c}
  &w(0,t)=w_y(0,t)=\phi(0,t)=0, \\
  &(a_1 w_{yy} + b_1 w_{tyy})_{y}(l,t) = d_{3}(t),
  \; (a_2 \phi_y + b_2 \phi_{ty})(l,t) = d_{4}(t),
  \end{split}\\
  & w(y,0) = w^0, w_t(y,0) = w^0_1, \phi(y,0) = \phi^0, \phi_t(y,0) = \phi_1^0, \label{1426d}
\end{align}
\end{subequations}
where \eqref{1426a} and \eqref{1426b} are defined in $(0,l)\times\mathbb{R}_{\geq 0}$, $a_i>0$, $b_i>0$, $c_i\geq 0$ ($i=1,2$), {}{$p_1\geq 0$, $p_2\leq 0$, $q_1\leq 0$ and $q_2\geq 0$} are constants depending on structural and aerodynamic parameters, $w^0,w^0_1\in H^2(0,l)$, $\phi^0,\phi^0_1\in H^1(0,l)$, $d_1,d_2 \in \ C^1(\mathbb{R}_{\geq 0};L^2(0,l))$, and $d_3,d_4 \in \ C^2(\mathbb{R}_{\geq 0};\mathbb{R})$. Functions $d_1(y,t)$ and $d_2(y,t)$ represent disturbances distributed over the domain, while functions $d_{3}(t)$ and $d_{4}(t)$ represent disturbances at the boundary $y=l$. In general, $d_1$ and $d_2$ can represent modeling errors and aerodynamic load perturbations, and $d_3$ and $d_4$ can represent actuation and sensing errors.
\begin{remark}
{}{ \eqref{system 1426} is a model of flexible aircraft wing with Kelvin-Voigt damping, in which the constants $\frac{b_1}{a_1}$ in \eqref{1426a} and $\frac{b_2}{a_2}$ in \eqref{1426b} represent the coefficients of bending Kelvin-Voigt damping and torsional Kelvin-Voigt damping respectively (see \cite{Lhachemi:2017} for instance).}
\end{remark}

\subsection{{}{Well-posedness analysis}}\label{Sec: Well-posedness analysis}

In this subsection, we prove the well-posedness of System \eqref{system 1426}. To this end, consider the Hilbert space
\begin{align*}
\mathcal{H} :=   \Big\{ & (f,g,h,z) \in H^2(0,l) \times L^2(0,l) \times H^1(0,l) \times L^2(0,l) : \\
& f(0)=f_y(0)=h(0)=0, f,f_y,h\in\mathrm{AC}[0,l]\Big\} ,
\end{align*}
endowed with the inner product
\begin{align*}
 & \langle(f_1,g_1,h_1,z_1),(f_2,g_2,h_2,z_2)\rangle_{\mathcal {H}} = \int_{0}^{l}(a_1 f_{1yy} f_{2yy} + g_1 g_2 + a_2 h_{1y} h_{2y} + z_1 z_2)\text{d}y.
\end{align*}
Introducing the state vector $X = (f,g,h,z)$, the norm ${\|\cdot\|_{\mathcal{H}}}$ on $\mathcal{H}$ induced by the inner product can be expressed as:
\begin{align*}
\|X\|^2_{\mathcal {H}} = & \|\sqrt{a_1} f_{yy}\|^2_{L^{2}(0,l)} +\|g\|^2_{L^{2}(0,l)} +\|\sqrt{a_2} h_{y}\|^2_{L^{2}(0,l)} + \|z\|^2_{L^{2}(0,l)}.
\end{align*}

In order to reformulate System \eqref{system 1426} in an abstract form evolving in the space $\mathcal{H}$, we define the following operators. First, we introduce the unbounded operator $\mathcal{A}_{1,d} : D(\mathcal{A}_{1,d}) \subset \mathcal{H} \rightarrow \mathcal{H}$ defined by
\begin{equation}
\mathcal{A}_{1,d} X := \left(g,-(a_1 f_{yy} + b_1 g_{yy})_{yy},z,(a_2 h_y + b_2 z_y)_y\right)
\end{equation}
on the following domain:
\begin{align*}
D(\mathcal{A}_{1,d}) := \Big\{ & (f,g,h,z)\in\mathcal{H} :
 g\in H^2(0,l) ,\; z\in H^1(0,l),\\
& (a_1 f_{yy} + b_1 g_{yy}) \in H^2(0,l) ,
 (a_2 h_y + b_2 z_y) \in H^1(0,l), \\
& f(0)=f_y(0)=0 , \; g(0)=g_y(0)=0 ,\\
& h(0)=0 ,\; z(0)=0 , (a_1 f_{yy} + b_1 g_{yy})(l) = 0 , \\
& f,f_y,g,g_y,h,z,(a_1 f_{yy} + b_1 g_{yy}) \in \mathrm{AC}[0,l] , \\
& (a_1 f_{yy} + b_1 g_{yy})_y , (a_2 h_y + b_2 z_y) \in \mathrm{AC}[0,l] \Big\},
\end{align*}
where $\mathrm{AC}[0,l]$ denotes the set of all absolutely continuous functions on $[0,l]$.
The contribution of other terms are embedded into the bounded operator $\mathcal{A}_{2}\in\mathcal{L}(\mathcal{H})$ defined as
\begin{equation}
\mathcal{A}_{2}X := (0,c_1 h + p_1 z + q_1 g,0,c_2 h + p_2 z + q_2 g),
\end{equation}
with domain $D(\mathcal{A}_{2}) = \mathcal{H}$ (the bounded property is a direct consequence of the Poincar{\'e}'s inequality). Finally, we consider the boundary operator $\mathcal{B} : D(\mathcal{B}) = D(\mathcal{A}_{1,d}) \rightarrow \mathcal{H}$ defined as
\begin{equation}
\mathcal{B}X := ((a_1 f_{yy} + b_1 g_{yy})_y(l),(a_2 h_y + b_2 z_{y})(l)).
\end{equation}
Thus, System \eqref{system 1426} can be represented in the following abstract system:
\begin{equation}\label{eq: pb abstract form 1}
\left\{\begin{split}
\dot{X} & = \left[ \mathcal{A}_{1,d} + \mathcal{A}_2 \right] X + (0,d_1,0,d_2) \\
\mathcal{B}X & = U \\
X_0 & \in D(\mathcal{A}_{1,d}) ,\; \mathrm{s.t.} \; \mathcal{B}X_0 = U(0)
\end{split}\right.
\end{equation}
where $U \triangleq (d_3,d_4)$.

In order to assess the well-posedness of (\ref{eq: pb abstract form 1}), we introduce the unbounded disturbance free operator $\mathcal{A}_1 = D(\mathcal{A}_1) \subset \mathcal{H} \rightarrow \mathcal{H}$ defined on the domain $D(\mathcal{A}_1) = D(\mathcal{A}_{1,d}) \cap \mathrm{ker}(\mathcal{B})$ by $\mathcal{A}_1 = \left.\mathcal{A}_{1,d}\right|_{D(\mathcal{A}_1)}$. We also consider the lifting operator {}{$\mathcal{T} \in \mathcal{L} (\mathbb{R}^2,\mathcal{H})$} defined by
\begin{equation}
\mathcal{T}(d_3,d_4) := \left( y \rightarrow -\dfrac{d_3}{6 a_1} y^2(3l-y) , 0 , y \rightarrow \dfrac{d_4}{a_2} y , 0 \right) .
\end{equation}
with $\left\Vert \mathcal{T} \right\Vert = \sqrt{l \times \max(1/a_2,l^2/(3 a_1))}$ when $\mathbb{R}^2$ is endowed with the usual $l^2$-norm. A direct computation shows that $R(\mathcal{T})\subset D(\mathcal{A}_{1,d})$, $\mathcal{A}_{1,d}\mathcal{T} = 0_{\mathcal{L}(\mathbb{R}^2,\mathcal{H})}$ and $\mathcal{B}\mathcal{T} = I_{\mathbb{R}^2}$, {}{where $R(\mathcal{T})$ is the range of the operator $\mathcal{T}$.} Thus, we can define a system in the following abstract form:
\begin{equation}\label{eq: pb abstract form 2}
\left\{\begin{split}
\dot{V} & = \left[ \mathcal{A}_1 + \mathcal{A}_2 \right] V + \mathcal{A}_2 \mathcal{T} U - \mathcal{T} \dot{U} + (0,d_1,0,d_2) \\
V_0 & \in D(\mathcal{A}_1)
\end{split}\right.
\end{equation}

{}{By \cite[Th 3.3.3]{Curtain:1995}, we have the following relationship between the solutions of abstract systems (\ref{eq: pb abstract form 1}) and (\ref{eq: pb abstract form 2}).}
\begin{lemma}\label{lemma: lifting}
Let $X_0\in D(\mathcal{A}_{1,d})$, $d_1,d_2\in\ C^1(\mathbb{R}_{\geq 0};L^2(0,l))$, and $d_3,d_4\in\ C^2(\mathbb{R}_{\geq 0};\mathbb{R})$ such that $\mathcal{B}X_0 = (d_3(0),d_4(0))$. Then $X\in\ C^0(\mathbb{R}_{\geq 0};D(\mathcal{A}_{1,d}))\cap\ C^1(\mathbb{R}_{\geq 0};\mathcal{H})$ with $X(0)=X_0$ is a solution of (\ref{eq: pb abstract form 1}) if and only if $V=X-\mathcal{T}U\in\ C^0(\mathbb{R}_{\geq 0};D(\mathcal{A}_{1}))\cap\ C^1(\mathbb{R}_{\geq 0};\mathcal{H})$ is a solution of (\ref{eq: pb abstract form 2}) for the initial condition $V_0 = X_0 - \mathcal{T}U(0)$.
\end{lemma}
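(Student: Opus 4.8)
The plan is to realize the claim as the concrete instance, for our operators $(\mathcal{A}_{1,d},\mathcal{A}_2,\mathcal{B},\mathcal{T})$, of the abstract boundary-control correspondence \cite[Th.~3.3.3]{Curtain:1995}: one shows that the affine substitution $V := X - \mathcal{T}U$ (with inverse $X = V + \mathcal{T}U$) maps solutions of \eqref{eq: pb abstract form 1} bijectively onto solutions of \eqref{eq: pb abstract form 2}, preserving the stated regularity and matching the initial data. I would start from the three properties of the lifting already recorded above, $R(\mathcal{T}) \subset D(\mathcal{A}_{1,d})$, $\mathcal{A}_{1,d}\mathcal{T} = 0$ and $\mathcal{B}\mathcal{T} = I_{\mathbb{R}^2}$, together with the fact that $\mathcal{T} \in \mathcal{L}(\mathbb{R}^2,\mathcal{H})$ has two-dimensional range. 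Because $U = (d_3,d_4) \in C^2(\mathbb{R}_{\geq 0};\mathbb{R}^2)$ and $\mathcal{T}$ is bounded and linear, $t \mapsto \mathcal{T}U(t)$ lies in $C^2(\mathbb{R}_{\geq 0};\mathcal{H})$ with $\tfrac{\text{d}}{\text{d}t}(\mathcal{T}U) = \mathcal{T}\dot{U}$; and since all norms are equivalent on the finite-dimensional space $R(\mathcal{T})$, this same curve is also $C^2$ into $D(\mathcal{A}_{1,d})$ with its graph norm.

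Next I would verify that the two solution classes correspond under the substitution. If $X \in C^0(\mathbb{R}_{\geq 0};D(\mathcal{A}_{1,d})) \cap C^1(\mathbb{R}_{\geq 0};\mathcal{H})$, then $V = X - \mathcal{T}U$ has $V(t) \in D(\mathcal{A}_{1,d})$ and $\mathcal{B}V(t) = \mathcal{B}X(t) - U(t)$; hence $V(t) \in D(\mathcal{A}_{1,d}) \cap \ker(\mathcal{B}) = D(\mathcal{A}_1)$ exactly when the boundary relation $\mathcal{B}X = U$ of \eqref{eq: pb abstract form 1} holds. Using $\mathcal{A}_{1,d}\mathcal{T}U(t) = 0$ one gets $\mathcal{A}_1 V(t) = \mathcal{A}_{1,d}X(t)$, so continuity of $X$ into $D(\mathcal{A}_{1,d})$ is equivalent to continuity of $V$ into $D(\mathcal{A}_1)$; similarly $\dot{V} = \dot{X} - \mathcal{T}\dot{U}$ gives $V \in C^1(\mathbb{R}_{\geq 0};\mathcal{H})$ iff $X$ is. The compatibility hypothesis $\mathcal{B}X_0 = (d_3(0),d_4(0)) = U(0)$ is precisely what forces $V_0 := X_0 - \mathcal{T}U(0) \in \ker(\mathcal{B})$, hence $V_0 \in D(\mathcal{A}_1)$, and clearly $V(0) = V_0$. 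The reverse direction is symmetric, recovering the boundary relation from $\mathcal{B}X = \mathcal{B}V + \mathcal{B}\mathcal{T}U = 0 + U = U$.

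It then remains to match the differential equations by substitution. Differentiating $V = X - \mathcal{T}U$ and inserting \eqref{eq: pb abstract form 1},
\begin{equation*}
\dot{V} = \dot{X} - \mathcal{T}\dot{U} = [\mathcal{A}_{1,d} + \mathcal{A}_2]X + (0,d_1,0,d_2) - \mathcal{T}\dot{U} ;
\end{equation*}
replacing $X$ by $V + \mathcal{T}U$ and using $\mathcal{A}_{1,d}\mathcal{T} = 0$, $\mathcal{A}_{1,d}V = \mathcal{A}_1 V$ for $V \in D(\mathcal{A}_1)$, and linearity of $\mathcal{A}_2$, this becomes
\begin{equation*}
\dot{V} = [\mathcal{A}_1 + \mathcal{A}_2]V + \mathcal{A}_2\mathcal{T}U - \mathcal{T}\dot{U} + (0,d_1,0,d_2) ,
\end{equation*}
i.e.\ \eqref{eq: pb abstract form 2}; carrying the computation out in reverse yields the converse implication. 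I do not anticipate a substantive obstacle: the argument is essentially algebraic, and the only point demanding a little care is the equivalence of the regularity classes — specifically the passage between the graph norm of $\mathcal{A}_{1,d}$ and that of $\mathcal{A}_1$ — which rests entirely on the identities $\mathcal{A}_{1,d}\mathcal{T} = 0$ and $\mathcal{B}\mathcal{T} = I_{\mathbb{R}^2}$ and on the $C^2$-smoothness of $t \mapsto \mathcal{T}U(t)$. All of this is exactly what \cite[Th.~3.3.3]{Curtain:1995} packages, so in the write-up I would simply instantiate that theorem.
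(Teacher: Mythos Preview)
Your proposal is correct and matches the paper's approach exactly: the paper does not give an independent proof of this lemma but simply invokes \cite[Th.~3.3.3]{Curtain:1995}, and you have correctly identified the relevant hypotheses (the lifting identities $R(\mathcal{T})\subset D(\mathcal{A}_{1,d})$, $\mathcal{A}_{1,d}\mathcal{T}=0$, $\mathcal{B}\mathcal{T}=I_{\mathbb{R}^2}$) and spelled out the substitution argument that theorem packages. Your write-up can be condensed to a one-line citation, as the paper does.
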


We can now use Lemma~\ref{lemma: lifting} to assess the well-posedness of the original abstract problem (\ref{eq: pb abstract form 1}).

\begin{theorem}
For any $d_1,d_2\in\ C^1(\mathbb{R}_{\geq 0};L^2(0,l))$, and $d_3,d_4\in\ C^2(\mathbb{R}_{\geq 0};\mathbb{R})$, the abstract problem (\ref{eq: pb abstract form 1}) admits a unique solution $X\in\ C^0(\mathbb{R}_{\geq 0};D(\mathcal{A}_{1,d}))\cap\ C^1(\mathbb{R}_{\geq 0};\mathcal{H})$ for any given $X_0\in D(\mathcal{A}_{1,d})$ such that $\mathcal{B}X_0 = (d_3(0),d_4(0))$.
\end{theorem}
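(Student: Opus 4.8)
The plan is to prove the theorem by reducing it, via Lemma~\ref{lemma: lifting}, to the well-posedness of the lifted abstract Cauchy problem \eqref{eq: pb abstract form 2}, which fits the framework of inhomogeneous linear evolution equations with a $C^0$-semigroup generator. First I would establish that $\mathcal{A}_1$ generates a strongly continuous semigroup $(e^{\mathcal{A}_1 t})_{t\ge0}$ on $\mathcal{H}$. The natural route is the Lumer--Phillips theorem: one checks that $\mathcal{A}_1$ is densely defined, that it is dissipative with respect to $\langle\cdot,\cdot\rangle_{\mathcal{H}}$, and that $\lambda I - \mathcal{A}_1$ is surjective for some (hence all) $\lambda>0$. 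Dissipativity is the computation $\mathrm{Re}\,\langle \mathcal{A}_1 X, X\rangle_{\mathcal{H}} \le 0$: integrating by parts twice in the beam term and once in the string term, the boundary contributions vanish because $X\in D(\mathcal{A}_1)=D(\mathcal{A}_{1,d})\cap\ker(\mathcal{B})$ kills the terms at $y=l$ and the clamped conditions kill those at $y=0$, leaving $-b_1\|g_{yy}\|_{L^2}^2 - b_2\|z_y\|_{L^2}^2 \le 0$ since $b_1,b_2>0$ (this is exactly where the Kelvin--Voigt damping enters). Surjectivity of $\lambda I-\mathcal{A}_1$ amounts to solving a coupled elliptic boundary value problem for $(f,g,h,z)$ with the prescribed homogeneous boundary data; I would solve it by a variational (Lax--Milgram) argument on an appropriate energy space, then bootstrap regularity to land in $D(\mathcal{A}_1)$.

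Next I would absorb the bounded perturbation: since $\mathcal{A}_2\in\mathcal{L}(\mathcal{H})$, the bounded perturbation theorem gives that $\mathcal{A}_1+\mathcal{A}_2$ also generates a $C^0$-semigroup on $\mathcal{H}$ with the same domain $D(\mathcal{A}_1)$. Then I would treat \eqref{eq: pb abstract form 2} as $\dot V = (\mathcal{A}_1+\mathcal{A}_2)V + F(t)$ with forcing term $F(t) = \mathcal{A}_2\mathcal{T}U(t) - \mathcal{T}\dot U(t) + (0,d_1(\cdot,t),0,d_2(\cdot,t))$. The hypotheses $d_3,d_4\in C^2(\mathbb{R}_{\ge0};\mathbb{R})$ and $d_1,d_2\in C^1(\mathbb{R}_{\ge0};L^2(0,l))$, together with $\mathcal{T}\in\mathcal{L}(\mathbb{R}^2,\mathcal{H})$ and $\mathcal{A}_2\in\mathcal{L}(\mathcal{H})$, ensure $F\in C^1(\mathbb{R}_{\ge0};\mathcal{H})$. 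By the standard theory of inhomogeneous abstract Cauchy problems (e.g. Pazy, or \cite[Ch.~3]{Curtain:1995}), a $C^1$ forcing term and an initial condition $V_0\in D(\mathcal{A}_1)$ yield a unique classical solution $V\in C^0(\mathbb{R}_{\ge0};D(\mathcal{A}_1))\cap C^1(\mathbb{R}_{\ge0};\mathcal{H})$ given by the variation-of-constants formula. Applying this with $V_0 = X_0 - \mathcal{T}U(0)$, which lies in $D(\mathcal{A}_1)$ precisely because $X_0\in D(\mathcal{A}_{1,d})$ and $\mathcal{B}X_0 = U(0)$ forces $\mathcal{B}(X_0-\mathcal{T}U(0)) = U(0)-U(0) = 0$, gives the unique solution of \eqref{eq: pb abstract form 2}.

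Finally I would invoke Lemma~\ref{lemma: lifting} in the reverse direction: $X := V + \mathcal{T}U$ is then the unique solution of \eqref{eq: pb abstract form 1} in the class $C^0(\mathbb{R}_{\ge0};D(\mathcal{A}_{1,d}))\cap C^1(\mathbb{R}_{\ge0};\mathcal{H})$ with $X(0)=X_0$, since $\mathcal{T}U\in C^1(\mathbb{R}_{\ge0};D(\mathcal{A}_{1,d}))$ by $U\in C^2$ and $R(\mathcal{T})\subset D(\mathcal{A}_{1,d})$, and uniqueness transfers through the bijection of the lemma. I expect the main obstacle to be the surjectivity (range) condition in the Lumer--Phillips step: writing down the coupled elliptic system resolved by $(\lambda I-\mathcal{A}_1)^{-1}$ and verifying coercivity of the associated bilinear form, and especially the elliptic-regularity bootstrap needed to show the solution actually meets all the absolute-continuity and higher Sobolev requirements listed in $D(\mathcal{A}_{1,d})$, is the one genuinely technical point; the rest is bookkeeping with standard semigroup perturbation and inhomogeneous-Cauchy-problem results.
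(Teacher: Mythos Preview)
Your proposal is correct and follows essentially the same route as the paper's proof: lift via Lemma~\ref{lemma: lifting}, use that $\mathcal{A}_1+\mathcal{A}_2$ generates a $C^0$-semigroup (bounded perturbation of a generator), check that the forcing term is $C^1$ in $\mathcal{H}$, apply the standard classical-solution theorem for inhomogeneous abstract Cauchy problems, and transfer back. The only difference is that the paper simply cites \cite{Lhachemi:2017} for the fact that $\mathcal{A}_1$ generates a $C^0$-semigroup, whereas you sketch a Lumer--Phillips argument for it; your sketch is reasonable and the part you flag as the genuine work (the range condition and the regularity bootstrap into $D(\mathcal{A}_{1,d})$) is indeed where the effort lies, but for the purposes of this theorem the paper treats that as known.
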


\begin{proof}
Let $X_0\in D(\mathcal{A}_{1,d})$ such that $\mathcal{B}X_0 = U(0)$. It is known that $\mathcal{A}_1$ generates a $C^0$-semigroup on $\mathcal{H}$ \cite{Lhachemi:2017}. As {}{$\mathcal{A}_{2}\in\mathcal{L}(\mathcal{H})$}, $\mathcal{A}_1+\mathcal{A}_2$ generates a $C^0$-semigroup on $\mathcal{H}$ {}{(see \cite[Th 3.2.1]{Curtain:1995})}. Furthermore, $\mathcal{A}_2\mathcal{T}U-\mathcal{T}\dot{U}+(0,d_1,0,d_2)\in\ C^1(\mathbb{R}_{\geq 0};\mathcal{H})$ {}{due to $\mathcal{T} \in \mathcal{L} (\mathbb{R}^2,\mathcal{H})$ and $\mathcal{A}_{2}\in\mathcal{L}(\mathcal{H})$.} Then, from \cite[Th 3.1.3]{Curtain:1995}, (\ref{eq: pb abstract form 2}) admits a unique solution $V\in\ C^0(\mathbb{R}_{\geq 0};D(\mathcal{A}_{1}))\cap\ C^1(\mathbb{R}_{\geq 0};\mathcal{H})$ for the initial condition $V(0)=V_0=X_0-\mathcal{T}U(0)$. We deduce then from Lemma~\ref{lemma: lifting} that there exists a unique solution $X\in\ C^0(\mathbb{R}_{\geq 0};D(\mathcal{A}_{1,d}))\cap\ C^1(\mathbb{R}_{\geq 0};\mathcal{H})$ to (\ref{eq: pb abstract form 1}) associated to the initial condition $X(0)=X_0$.
\end{proof}


\section{{}{Stability Assessment}}\label{Sec: Stability}
In this section we establish the stability property of System~\eqref{system 1426}. {}{Let $D(\mathcal{A}_{1,d})$, $\mathcal{H}$ and the norm $\|\cdot\|_{\mathcal{H}}$ be defined as in Section \ref{Sec: Well-posedness analysis}. Let $(w,\phi)$ be the unique solution of System~\eqref{system 1426} satisfying $(w,w_t,\phi,\phi_t)\in \ C^0(\mathbb{R}_{\geq 0};D(\mathcal{A}_{1,d}))\cap\ C^1(\mathbb{R}_{\geq 0};\mathcal{H})$. {}{For simplicity, throughout this section, we express the state variable and its initial value as} $X=(w,w_t,\phi,\phi_t)$ and $X_0=(w^0,w_1^0,\phi^0,\phi_1^0)$.} {}{Define the energy function
\begin{align}\label{+12}
E(t)=\frac{1}{2}\int_{0}^{l}\big(|w_t|^2+a_1|w_{yy}|^2+|\phi_t|^2+a_2|\phi_{y}|^2\big)\text{d}y.
\end{align}
Then $ \|X(\cdot,t)\|_{\mathcal{H}}^2=2E(t)$ for all $t\geq 0$.}
\begin{definition}
System~\eqref{system 1426} is said to be input-to-state stable (ISS) {}{with respect to} disturbances $d_{1},d_{2}\in C^{1}(\mathbb{R}_{\geq 0};{L^{2}(0,l)})$ and ${d_{3},d_4}\in C^2(\mathbb{R}_{\geq 0})\cap L^{\infty}( \mathbb{R}_{\geq 0})$, if there exist functions $ \gamma_1, \gamma_2, \gamma_3, \gamma_4 \in \mathcal {K}$ and $\beta\in \mathcal {K}\mathcal {L}$ such that the solution of System~\eqref{system 1426} satisfies
{}
{\begin{align}\label{Eq: ISS def}
     \|X(\cdot,t)\|_{\mathcal{H}} &\leq \beta( \|X_0\|_\mathcal{H},t)
      +\gamma_1(\|d_{1}\|_{L^{\infty}(0,t;L^2(0,l))}) +\gamma_2(\|d_{2}\|_{L^{\infty}(0,t;L^2(0,l))}) \nonumber\\
     & \ \ \ \ \ \ +\gamma_3(\|d_{3}\|_{L^{\infty}(0,t)})+\gamma_4(\|d_{4}\|_{L^{\infty}(0,t)}),\ \forall t\geq 0.
\end{align}}
Moreover, System~\eqref{system 1426} is said to be exponential input-to-state stable (EISS) {}{with respect to} disturbances $d_1$, $d_2$, $d_3$, and $d_4$ if there exist $\beta'\in \mathcal {K}_{\infty}$ and a constant $\lambda > 0$ such that \eqref{Eq: ISS def} holds with $\beta( \|X_0\|_\mathcal{H},t) = \beta'(\|X_0\|_\mathcal{H})e^{-\lambda t}$.
\end{definition}

\begin{definition}
System~\eqref{system 1426} is said to be integral input-to-state stable (iISS) {}{with respect to} disturbances $d_{1},d_{2}\in C^{1}(\mathbb{R}_{\geq 0};{L^{2}(0,l)})$ and ${d_{3},d_4}\in C^2(\mathbb{R}_{\geq 0})\cap L^{\infty}( \mathbb{R}_{\geq 0})$, if there exist functions $\beta\in \mathcal {K}\mathcal {L},\theta_1,\theta_2,\theta_3,\theta_4\in \mathcal {K}_{\infty} $ and $\gamma_1 ,\gamma_2 ,\gamma_3 ,\gamma_4\in \mathcal {K}$, such that the solution of System~\eqref{system 1426} satisfies
{}{\begin{align}
\begin{split}\label{Eq: iISS def}
      \|X(\cdot,t)\|_{\mathcal{H}}&\leq  {\beta( \|X_0\|_\mathcal{H},t)}
          +\theta_{1}\bigg(\!\!\int_{0}^t\!\!\gamma_1(\|d_{1}(\cdot,s)\|_{L^2(0,l)})\text{d}s\bigg)\\
       &\ \ \ \ \ \  +{\theta_{2}}\bigg(\!\!\int_{0}^t\!\!{\gamma_2}(\|d_{2}(\cdot,s)\|_{L^2(0,l)})\text{d}s\bigg)\\
       &\ \ \ \ \ \  +{\theta_{3}}\bigg(\!\!\int_{0}^t\!\!{\gamma_3}(|d_3(s)|)\text{d}s\bigg) +{\theta_{4}}\bigg(\!\!\int_{0}^t\!\!{\gamma_4}(|d_4(s)|)\text{d}s\bigg),\ \forall t\geq 0.
\end{split}
\end{align}}
Moreover, System~\eqref{system 1426} is said to be exponential integral input-to-state stable (EiISS) {}{with respect to} disturbances $d_1$, $d_2$, $d_3$, and $d_4$ if there exist $\beta'\in \mathcal {K}_{\infty}$ and a constant $\lambda > 0$ such that \eqref{Eq: iISS def} holds with $\beta( \|X_0\|_\mathcal{H},t) = \beta'(\|X_0\|_\mathcal{H})e^{-\lambda t}$.
\end{definition}

In order to obtain the stability of the solutions, we make the following {}{assumptions}:
\begin{subequations}\label{assumption}
\begin{align}
  &l^2\sqrt{2l}\|d_3\|_{L^\infty( \mathbb{R}_{\geq 0})}<2a_1,\label{8a}\\
  &\sqrt{2l}(1+l\sqrt{l})(1+K_m)(1+c_1+c_2{}{-p_{2}+{}{q_{2}}} +\|d_4\|_{L^\infty( \mathbb{R}_{\geq 0})})<a_2,\label{8b}\\
  &l^2\sqrt{2l}(1+l^3)(c_1+p_{1}{}{-q_1}+q_{2} +\|d_3\|_{L^\infty( \mathbb{R}_{\geq 0})})<2b_1,\label{8c}\\
  &\sqrt{2l}(1+l^3)(1+p_{1}+c_2{}{-p_{2}}+q_{2} +\|d_4\|_{L^\infty( \mathbb{R}_{\geq 0})})<b_2,\label{8d}
\end{align}
\end{subequations}
where $K_m=\max\Big\{\frac{1}{\sqrt{a_1}},$ $\frac{1}{\sqrt{a_2}}, \frac{l^2}{2\sqrt{a_2}},\frac{l^4}{4\sqrt{a_1}}\Big\}$.

{}{For notational simplicity, we denote hereafter $\|\cdot\|_{L^{2}(0,l)}$ by $\|\cdot\|$.}
\begin{theorem}\label{Th: EISpS}
{Assume that}
\begin{itemize}
\item[(i)] $d_{1},d_{2}\in C^1(\mathbb{R}_{\geq 0};L^2(0,l))$;
\item[(ii)] ${d_{3},d_4}\in C^2(\mathbb{R}_{\geq 0})\cap L^{\infty}( \mathbb{R}_{\geq 0})$;
\item[(iii)] all conditions in \eqref{assumption} are satisfied.
\end{itemize}
Then System~\eqref{system 1426} is EISS and EiISS, having the following estimates:
{}{\begin{align}
  \|X(\cdot,t)\|_{\mathcal{H}}
\leq & C  e^{ -\frac{\mu_m}{4}t}\|X_0\|_{\mathcal{H}} + C\Big(\|d_{1}\|_{L^{\infty}(0,t;L^2(0,l))} +\|d_{2}\|_{L^{\infty}(0,t;L^2(0,l))}\notag\\
     & +\|d_3\|^{\frac{1}{2}}_{L^{\infty}(0,t)}+\|d_4\|^{\frac{1}{2}}_{L^{\infty}(0,t)},\label{main result 1}
\end{align}
and
\begin{align}
\|X(\cdot,t)\|_{\mathcal{H}}\leq & C e^{ -\frac{\mu_m}{4}t}\|X_0\|_{\mathcal{H}} +C\bigg(\int_{0}^t\|d_{1}(\cdot,s)\|^2\text{d}s\bigg)^{\frac{1}{2}}+C\bigg(\int_{0}^t\|d_2(\cdot,s)\|^2\text{d}s\bigg)^{\frac{1}{2}}\notag\\
         &+ C\bigg(\int_{0}^t\big(|d_{3}(s)|\text{d}s\bigg)^{\frac{1}{2}}+C\bigg(\int_{0}^t|d_{4}(s)|\text{d}s\bigg)^{\frac{1}{2}} .\label{main result 2}
   \end{align}
where $C>0$ and $\mu_m>0$ are some constants independent of $t$.}

\end{theorem}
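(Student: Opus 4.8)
The plan is to run a Lyapunov argument directly on the classical solution $X=(w,w_t,\phi,\phi_t)\in C^0(\mathbb{R}_{\geq 0};D(\mathcal{A}_{1,d}))\cap C^1(\mathbb{R}_{\geq 0};\mathcal{H})$ provided by the well-posedness theorem, so as to obtain a differential inequality $\dot V(t)\leq-\mu_m V(t)+\Delta(t)$ for a functional $V$ equivalent to the energy $E$ of \eqref{+12} (hence to $\tfrac12\|X\|_{\mathcal H}^2$), where the forcing $\Delta(t)$ involves $d_1,d_2,d_3,d_4$ only — and crucially not their time derivatives. The comparison lemma then gives $V(t)\leq V(0)e^{-\mu_m t}+\int_0^t e^{-\mu_m(t-s)}\Delta(s)\,\mathrm ds$; bounding the convolution by $\mu_m^{-1}\sup_{s\leq t}\Delta(s)$ yields \eqref{main result 1}, while bounding it by $\int_0^t\Delta(s)\,\mathrm ds$ yields \eqref{main result 2}, after passing from $V$ back to $\|X\|_{\mathcal H}=\sqrt{2E}$ via the norm-equivalence for $V$ and using $\sqrt{a+b}\leq\sqrt a+\sqrt b$. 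The square root here is exactly why the boundary disturbances appear in \eqref{main result 1}--\eqref{main result 2} through $\|d_3\|^{1/2},\|d_4\|^{1/2}$ (resp. $(\int|d_3|)^{1/2},(\int|d_4|)^{1/2}$): they will enter $\Delta$ only at the linear order $|d_3(t)|+|d_4(t)|$, whereas $d_1,d_2$ enter at the quadratic order $\|d_1(\cdot,t)\|^2+\|d_2(\cdot,t)\|^2$, producing $\|d_1\|,\|d_2\|$ (resp. the $L^2$-in-time quantities) without a square root.

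For $V$ I would take $V=E+\varepsilon_1\big(\int_0^l w\,w_t\,\mathrm dy+\tfrac{b_1}{2}\|w_{yy}\|^2\big)+\varepsilon_2\big(\int_0^l \phi\,\phi_t\,\mathrm dy+\tfrac{b_2}{2}\|\phi_y\|^2\big)$ with explicit constants $\varepsilon_1,\varepsilon_2>0$; this is precisely the combination of norms of the solution and its derivatives that surfaces in the a priori estimates behind the regularity analysis. For $\varepsilon_1,\varepsilon_2$ small enough, Poincaré's inequality — available since $w(0,t)=w_y(0,t)=\phi(0,t)=0$ — gives a two-sided bound $c_1 E\leq V\leq c_2 E$. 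To differentiate $V$: multiplying \eqref{1426a} by $w_t$ and \eqref{1426b} by $\phi_t$, integrating over $(0,l)$, integrating by parts (twice, resp. once), and using $w(0)=w_y(0)=\phi(0)=0$, $(a_1w_{yy}+b_1w_{tyy})(l)=0$ (this last one because $X(t)\in D(\mathcal{A}_{1,d})$), $(a_1w_{yy}+b_1w_{tyy})_y(l)=d_3$, $(a_2\phi_y+b_2\phi_{ty})(l)=d_4$, one gets $\dot E=-b_1\|w_{tyy}\|^2-b_2\|\phi_{ty}\|^2\mp d_3(t)w_t(l,t)\pm d_4(t)\phi_t(l,t)+\mathcal R_1$, with $\mathcal R_1$ collecting the in-domain terms $q_1\|w_t\|^2$ and $p_2\|\phi_t\|^2$ (both $\leq 0$ by the sign hypotheses on $q_1,p_2$), the couplings $c_1\!\int\!\phi w_t$, $(p_1+q_2)\!\int\!\phi_t w_t$, $c_2\!\int\!\phi\phi_t$, and $\int d_1 w_t+\int d_2\phi_t$. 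Multiplying \eqref{1426a} by $w$ and \eqref{1426b} by $\phi$ and integrating by parts produces — after the $\tfrac{b_i}{2}$-corrections in $V$ cancel the $\tfrac{\mathrm d}{\mathrm dt}\|w_{yy}\|^2$ and $\tfrac{\mathrm d}{\mathrm dt}\|\phi_y\|^2$ contributions from the Kelvin--Voigt terms — the indispensable coercive terms $-\varepsilon_1 a_1\|w_{yy}\|^2$ and $-\varepsilon_2 a_2\|\phi_y\|^2$, together with $\varepsilon_1\|w_t\|^2$, $\varepsilon_2\|\phi_t\|^2$, boundary terms $\mp\varepsilon_1 w(l,t)d_3(t)$, $\pm\varepsilon_2\phi(l,t)d_4(t)$, and further lower-order couplings. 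So $\dot V$ is the sum of the four negative leading terms $-b_1\|w_{tyy}\|^2,-b_2\|\phi_{ty}\|^2,-\varepsilon_1 a_1\|w_{yy}\|^2,-\varepsilon_2 a_2\|\phi_y\|^2$, the in-domain couplings (treated by Cauchy--Schwarz, Poincaré and Young and absorbed into those four), the contributions $\int d_1 w_t\leq\eta\|w_{tyy}\|^2+C_\eta\|d_1\|^2$ and analogously for $d_2$ (which feed the quadratic part of $\Delta$), and the boundary disturbance terms.

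The hard part will be item (iv), the boundary disturbance terms $d_3 w_t(l,\cdot)$, $d_4\phi_t(l,\cdot)$, $w(l,\cdot)d_3$, $\phi(l,\cdot)d_4$, which must be controlled \emph{without} integrating by parts in time (which would resurrect $\dot d_3,\dot d_4$). Since $w_t(0,t)=0$ and $w_{ty}(0,t)=0$, one has $w_t(l,t)=\int_0^l\!\int_0^y w_{tyy}\,\mathrm d\xi\,\mathrm dy$, hence a trace estimate $|w_t(l,t)|\leq\kappa\|w_{tyy}\|$ with $\kappa$ an explicit power of $l$, and likewise $|\phi_t(l,t)|\leq\kappa'\|\phi_{ty}\|$, $|w(l,t)|\leq\kappa''\|w_{yy}\|$, $|\phi(l,t)|\leq\kappa'''\|\phi_y\|$. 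The key manoeuvre is then to apply Young's inequality in the split $|d_3(t)|\,\kappa\,\|w_{tyy}\|\leq\tfrac12\kappa\,|d_3(t)|+\tfrac12\kappa\,|d_3(t)|\,\|w_{tyy}\|^2$: the first summand contributes \emph{linearly} in $|d_3(t)|$ to $\Delta$, and the second, being $\leq\tfrac12\kappa\,\|d_3\|_{L^\infty(0,t)}\|w_{tyy}\|^2$, is absorbed into $-b_1\|w_{tyy}\|^2$ — possible precisely under \eqref{8a} and \eqref{8c} (the analogous treatment on the $\phi$-side is what \eqref{8b} and \eqref{8d} encode). This is why boundedness of $d_3,d_4$ is assumed, and why a plain application of Young (which would have cost a $|d_3|^2$ term and degraded $\|d_3\|^{1/2}$ to $\|d_3\|$) is deliberately avoided.

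Assembling everything, one obtains under \eqref{assumption} that $\dot V\leq-\mu_m V+C\big(\|d_1(\cdot,t)\|^2+\|d_2(\cdot,t)\|^2+|d_3(t)|+|d_4(t)|\big)$, with $\mu_m>0$ an explicit constant — essentially the smallest of the surviving coefficients of $\|w_{tyy}\|^2,\|\phi_{ty}\|^2,\|w_{yy}\|^2,\|\phi_y\|^2$ divided by the upper norm-equivalence constant of $V$ — and hence $\Delta(t)=C(\|d_1(\cdot,t)\|^2+\|d_2(\cdot,t)\|^2+|d_3(t)|+|d_4(t)|)$; running the comparison argument of the first paragraph then gives \eqref{main result 1} and \eqref{main result 2}, the factor $\mu_m/4$ in the exponentials being what remains of $\mu_m$ after taking the square root and accounting for the coercivity constants. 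Beyond the boundary-term manoeuvre, the main obstacle is the lengthy but elementary bookkeeping that verifies the four inequalities in \eqref{assumption} are exactly what is needed to keep each of the four leading quadratic coefficients strictly negative once all couplings and boundary terms have been absorbed, and that $\varepsilon_1,\varepsilon_2$ (together with the various Young parameters) can be chosen compatibly with all four conditions simultaneously.
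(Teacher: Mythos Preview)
Your proposal is correct and follows essentially the same Lyapunov route as the paper: the same augmented-energy functional (the paper takes $\mathcal E=E+\varepsilon_1\!\int\phi\phi_t+\varepsilon_2\!\int ww_t$ without your extra $\tfrac{b_i}{2}$ terms, handling the resulting cross terms $\int w_{yy}w_{tyy}$, $\int\phi_y\phi_{ty}$ by Young's inequality rather than by exact cancellation), the same trace-plus-``$a\le 1+a^2$'' split for the boundary contributions with absorption via $\|d_3\|_{L^\infty},\|d_4\|_{L^\infty}$, and the same comparison-lemma finish. Your $\tfrac{b_i}{2}$ corrections are a harmless simplification that merely eliminates two Young parameters from the bookkeeping.
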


\begin{proof}{}{We introduce first the following notations:
\begin{align*}
  &f_1(\phi,\phi_t,w_t,d_{1})=c_1\phi+p_{1}\phi_t+q_{1}w_t+d_{1},\\
  &f_2(\phi,\phi_t,w_t,d_{2})=c_2\phi+p_{2}\phi_t+q_{2}w_t+d_{2}.
\end{align*}}
 In order to find an appropriate Lyapunov functional candidate, multiplying \eqref{1426a} by $w_t$ and considering the fact that $w\in C^{1}(\mathbb{R}_{\geq 0}; H^2(0,l))\cap C^{2}(\mathbb{R}_{\geq 0}; L^2(0,l))$ with $ (a_1w_{yy}+b_1w_{tyy})(\cdot,t)\in H^{2}(0,1)$, we get
\begin{align*}
&\int_{0}^{l}f_1(\phi,\phi_t,w_t,d_{1})w_t\text{d}y
     =\int_{0}^{l}(w_{tt}+(a_1w_{yy}+b_1w_{tyy})_{yy})w_t\text{d}y\notag\\
 =&\int_{0}^{l}w_{tt}w_t\text{d}y+a_{1}\int_{0}^{l}w_{yy}w_{tyy}\text{d}y
      +b_1\int_{0}^{l}w_{tyy}^2\text{d}y+d_{3}(t)w_t(l,t)\notag\\
 =&\frac{1}{2}\frac{d}{dt}\big(\|w_t\|^2+a_1\|w_{yy}\|^2\big)+b_1\|w_{tyy}\|^2+d_{3}(t)w_t(l,t),
_{}\end{align*}
which gives
\begin{align}\label{+8}
\frac{1}{2}\frac{d}{dt}\big(\|w_t\|^2 &+a_1\|w_{yy}\|^2\big) \notag\\
  =& - b_1\|w_{tyy}\|^2-d_{3}(t)w_t(l,t) +\int_{0}^{l}f_1(\phi,\phi_t,w_t,d_{1})w_t\text{d}y.
\end{align}
Multiplying \eqref{1426a} by $\phi_t$ and since $\phi \in C^{1}(\mathbb{R}_{\geq 0}; H^1(0,l))\cap C^{2}(\mathbb{R}_{\geq 0}; L^2(0,l))$ with $(a_2\phi_{y}+b_2\phi_{ty})(\cdot,t)\in  H^1(0,l)$, we get
\begin{align*}
&\int_{0}^{l}f_2(\phi,\phi_t,w_t,d_{2})\phi_t\text{d}y =\int_{0}^{l}(\phi_{tt}
- (a_2\phi_{y}+b_2\phi_{ty})_{y})\phi_t\text{d}y\notag\\
 =&\frac{1}{2}\frac{d}{dt}\big(\|\phi_t\|^2+a_2\|\phi_{y}\|^2\big) +b_2\|\phi_{ty}\|^2-d_{4}(t)\phi_t(l,t),
\end{align*}
which gives
\begin{align}\label{+9}
&\frac{1}{2}\frac{d}{dt}\big(\|\phi_t\|^2+a_2\|\phi_{y}\|^2\big)
 = -b_2\|\phi_{ty}\|^2+d_{4}(t)\phi_t(l,t) +\int_{0}^{l}\!\!f_2(\phi,\phi_t,w_t,d_{2})\phi_t\text{d}y.
\end{align}
In order to deal with the items containing $\|w_{yy}\|^2 $ and $\|\phi_{y}\|^2 $, multiplying \eqref{1426a} and \eqref{1426b} by $w$ and $\phi$, respectively, yields
\begin{align*}
\int_{0}^{l}w_{tt}w\text{d}y
=&-a_1\|w_{yy}\|^2 -d_{3}(t)w(l,t)
  - \int_{0}^{l}w_{yy}w_{tyy}\text{d}y \\
  &+\int_{0}^{l}f_1(\phi,\phi_t,w_t,d_{1})w\text{d}y,\\
\int_{0}^{l}\phi_{tt}\phi\text{d}y
=& -a_2\|\phi_{y}\|^2+d_{4}(t)\phi(l,t)
  -\int_{0}^{l}\phi_y\phi_{ty}\text{d}y + \int_{0}^{l}f_2(\phi,\phi_t,w_t,d_{2})\phi\text{d}y.
\end{align*}
Note that for any $\eta\in C^2(\mathbb{R}_{\geq 0};L^2(0,l))$, there holds $\frac{d}{dt}\int_{0}^{l}\eta\eta_{t}\text{d}y=\int_{0}^{l}\eta^2_{t}\text{d}y+
\int_{0}^{l}\eta\eta_{tt}\text{d}y$. Then we have
\begin{align}
\frac{d}{dt}\!\int_{0}^{l}\!\!ww_{t}\text{d}y&= -a_1\|w_{yy}\|^2 \!-\!d_{3}(t)w(l,t) - \int_{0}^{l}\!\!w_{yy}w_{tyy}\text{d}y \notag\\
&\ \ \ +\int_{0}^{l}\!\!w^2_{t}\text{d}y+\int_{0}^{l}f_1(\phi,\phi_t,w_t,d_{1})w\text{d}y,\label{+10}\\
\frac{d}{dt}\!\int_{0}^{l}\!\!\phi\phi_{t}\text{d}y&= -a_2\|\phi_{y}\|^2+d_{4}(t)\phi(l,t) -\int_{0}^{l}\phi_y\phi_{ty}\text{d}y \notag\\
&\ \ \  +\int_{0}^{l}\phi^2_{t}\text{d}y
  +\int_{0}^{l}f_2(\phi,\phi_t,w_t,d_{2})\phi\text{d}y.\label{+11}
\end{align}
We define
the augmented energy
\begin{align}\label{+13}
\mathcal {E}(t)=E(t)+\varepsilon_{1}\int_{0}^{l}\phi\phi_{t}\text{d}y+\varepsilon_{2}\int_{0}^{l}ww_{t}\text{d}y,
\end{align}
where $0<\varepsilon_{1}<1$ and $0<\varepsilon_{2}<1$ are constants to be chosen later.

Note that (see \cite{Lhachemi:2017})
\begin{align*}
\left|\int_{0}^{l}ww_{t}\text{d}y\right|
\leq \frac{\max\{1,l^4/2\}}{\sqrt{a_1}}E(t).
\end{align*}
and
\begin{align*}
\left|\int_{0}^{l}\phi\phi_{t}\text{d}y\right|
\leq \frac{\max\{1,l^2/2\}}{\sqrt{a_2}}E(t),
\end{align*}
Choosing $0<\varepsilon_1,\varepsilon_2<\frac{1}{K_m}$, we have
\begin{align}\label{Hugo 25}
\frac{1}{1+K_m\varepsilon_m}\mathcal {E}(t)\leq E(t)\leq \frac{1}{1-K_m\varepsilon_m}\mathcal {E}(t),
\end{align}
where $\varepsilon_m=\max\{\varepsilon_1,\varepsilon_2\}$.

Based on \eqref{+8} to \eqref{+13} and Appendix~\ref{Appendix: b}, we get
\begin{align}
\frac{\text{d}}{\text{d}t}\mathcal {E}(t)
=&\frac{\text{d}}{\text{d}t}E(t)+\varepsilon_{1}\frac{\text{d}}{\text{d}t} \int_{0}^{l}\phi\phi_{t}\text{d}y +\varepsilon_{2}\frac{\text{d}}{\text{d}t}\int_{0}^{l}ww_{t}\text{d}y\notag\\
=& - b_1\|w_{tyy}\|^2-d_{3}(t)w_t(l,t)
   +\int_{0}^{l}f_1(\phi,\phi_t,w_t,d_{1})w_t\text{d}y-b_2\|\phi_{ty}\|^2\notag\\
&+d_{4}(t)\phi_t(l,t)+\int_{0}^{l}\!\!f_2(\phi,\phi_t,w_t,d_{2})\phi_t\text{d}y
 +\varepsilon_{1}\bigg(-a_2\|\phi_{y}\|^2+d_{4}(t)\phi(l,t)\notag\\
& -\int_{0}^{l}\phi_y\phi_{ty}\text{d}y +\int_{0}^{l}\phi^2_{t}\text{d}y + \int_{0}^{l}f_2(\phi,\phi_t,w_t,d_{2})\phi\text{d}y \bigg)
 +\varepsilon_{2}\bigg(\!\!-a_1\|w_{yy}\|^2 \notag\\
&-d_{3}(t)w(l,t) - \int_{0}^{l}w_{yy}w_{tyy}\text{d}y +\int_{0}^{l}w^2_{t}\text{d}y + \int_{0}^{l}f_1(\phi,\phi_t,w_t,d_{1})w\text{d}y\!\!\bigg)\notag\\
=&- b_1\|w_{tyy}\|^2-\varepsilon_2a_1\|w_{yy}\|^2+\varepsilon_2\|w_{t}\|^2 - b_2\|\phi_{ty}\|^2\notag\\
&-\varepsilon_1a_2\|\phi_{y}\|^2+\varepsilon_1\|\phi_{t}\|^2
 -\varepsilon_1 \int_{0}^{l}\phi_y\phi_{ty}\text{d}y-\varepsilon_2\int_{0}^{l}w_{yy}w_{tyy}\text{d}y \notag\\
&+ \int_{0}^{l} f_1(\phi,\phi_t,w_t,d_{1})(w_t+\varepsilon_2w)\text{d}y
 +\int_{0}^{l}f_2(\phi,\phi_t,w_t,d_{2})(\phi_t+\varepsilon_1\phi)\text{d}y\notag\\
&-\big(w_{t}(l,t)+\varepsilon_2w(l,t)\big)d_{3}(t)
 +\big(\phi_{t}(l,t)+\varepsilon_1\phi(l,t)\big)d_{4}(t)\notag\\
\leq& (\varepsilon_2+\Lambda_1)\|w_{t}\|^2+(\Lambda_2-\varepsilon_2a_1)\|w_{yy}\|^2
 +(\varepsilon_1 +\Lambda_3)\|\phi_{t}\|^2\notag\\
&+(\Lambda_4-\varepsilon_1a_2)\|\phi_{y}\|^2+(\Lambda_5-b_2)\|\phi_{ty}\|^2+(\Lambda_6-b_1)\|w_{tyy}\|^2+\Lambda_7\label{1716}\notag\\
\leq & (\varepsilon_2+\Lambda_1)\|w_{t}\|^2+(\Lambda_2-\varepsilon_2a_1)\|w_{yy}\|^2
  +(\varepsilon_1 +\Lambda_3)\|\phi_{t}\|^2 \notag\\
& +(\Lambda_4-\varepsilon_1a_2)\|\phi_{y}\|^2 +\frac{2}{l^2}(\Lambda_5-b_2)\|\phi_{t}\|^2+\frac{4}{l^4}(\Lambda_6-b_1)\|w_{t}\|^2+\Lambda_7\notag\\
\leq &\bigg(\varepsilon_2+\Lambda_1+\frac{4}{l^4}(\Lambda_6-b_1)\bigg)\|w_{t}\|^2
 +(\Lambda_2-\varepsilon_2a_1)\|w_{yy}\|^2\notag\\
&+\bigg(\varepsilon_1 +\Lambda_3+\frac{2}{l^2}(\Lambda_5-b_2)\bigg)\|\phi_{t}\|^2
 +(\Lambda_4-\varepsilon_1a_2)\|\phi_{y}\|^2+\Lambda_7,
\end{align}
with the coefficients satisfying
\begin{subequations}\label{1717}
\begin{align}
&\Lambda_5-b_2<\Lambda_5'-b_2<0,\label{17a}\\
&\Lambda_6-b_1<\Lambda_6'-b_1<0,\label{17b}\\
&\varepsilon_2+\Lambda_1+\frac{4}{l^4}(\Lambda_6-b_1)
<\varepsilon_2+\Lambda_1+\frac{4}{l^4}(\Lambda_6'-b_1)<0,\label{17c}\\
&\Lambda_2-\varepsilon_2a_1<\Lambda_2'-\varepsilon_2a_1<0,\label{17d}\\
&\varepsilon_1
+\Lambda_3+\frac{2}{l^2}(\Lambda_5-b_2)<\varepsilon_1
+\Lambda_3+\frac{2}{l^2}(\Lambda_5'-b_2)<0,\label{17e}\\
&\Lambda_4-\varepsilon_1a_2<\Lambda_4'-\varepsilon_1a_2<0,\label{17f}
\end{align}
\end{subequations}
where $\Lambda_1,\Lambda_2,...,\Lambda_7$ and $\Lambda_2',\Lambda_4',...,\Lambda_7'$ are defined in \eqref{Eq: Lambda def} in Appendix~\ref{Appendix: b}. The proof of the above inequalities are given in Appendix~\ref{Appendix: c}.

Setting  $\mu_{m}=\min\big\{-\varepsilon_2-\Lambda_1-\frac{4}{l^4}(\Lambda_6'-b_1),
-\Lambda_2'+\varepsilon_2a_1,-\varepsilon_1
-\Lambda_3-\frac{2}{l^2}(\Lambda_5'-b_2),-\Lambda_4'+\varepsilon_1a_2\big\}>0$, which is independent of $t$, we obtain from \eqref{Hugo 25} and \eqref{1716}:
\begin{align}
\frac{\text{d}}{\text{d}t}\mathcal {E}(t)
\leq &  -\mu_{m}E(t)+\Lambda_7\notag\\
\leq &-\frac{\mu_m}{1+K_m\varepsilon_m}\mathcal {E}(t)+\Lambda_7\notag\\
\leq &-\frac{\mu_m}{2}\mathcal {E}(t)+\Lambda_7\notag\\
=&-\frac{\mu_m}{2}\mathcal {E}(t)+\frac{\|d_{1}(\cdot,t)\|^2}{2}\bigg(\frac{1}{r_7}+\frac{\varepsilon_2}{r_8} \bigg)
  +\frac{\|d_{2}(\cdot,t)\|^2}{2}\bigg(\!\frac{1}{r_9}+\frac{\varepsilon_1}{r_{10}}\!\bigg)\notag\\
 &
 +2\sqrt{2l}\big(|d_{3}(t)|+|d_{4}(t)\big)\notag\\
\leq & -\frac{\mu_m}{2}\mathcal {E}(t)+C_1\Big(\|d_{1}(\cdot,t)\|^2 +\|d_{2}(\cdot,t)\|^2 + |d_{3}(t)|+|d_{4}(t)|\Big), \label{iISS}\\
\leq &-\frac{\mu_m}{2}\mathcal {E}(t)+ C_1\Big(\|d_{1}\|^2_{L^{\infty}(0,t;L^2(0,l))}+\|d_{2}\|^2_{L^{\infty}(0,t;L^2(0,l))} \notag\\
&+\|d_{3}\|_{L^{\infty}(0,t)}+\|d_{4}\|_{L^{\infty}(0,t)}\Big),\label{BISS}
\end{align}
where $C_1>0$ is a constant independent of $t$.

We infer from {}{Comparison Lemma (see, \cite[Lemma~3.4]{Khalil2001})} and \eqref{BISS} that
\begin{align*}
\mathcal {E}(t)\leq &\mathcal {E}(0)e^{ -\frac{\mu_m}{2}t}+\frac{2C_1}{\mu_m}
\Big(\|d_{1}\|^2_{L^{\infty}(0,t;L^2(0,l))} + \|d_{2}\|^2_{L^{\infty}(0,t;L^2(0,l))} \notag\\
  &+\|d_{3}\|_{L^{\infty}(0,t)}+\|d_{4}\|_{L^{\infty}(0,t)}\Big)(1-e^{ -\frac{\mu_m}{2}t})\notag\\
\leq &\mathcal {E}(0)e^{ -\frac{\mu_m}{2}t} + \frac{2C_1}{\mu_m}\Big( \|d_{1}\|^2_{L^{\infty}(0,t;L^2(0,l))}+\|d_{2}\|^2_{L^{\infty}(0,t;L^2(0,l))} \notag\\
     &+\|d_{3}\|_{L^{\infty}(0,t)}+\|d_{4}\|_{L^{\infty}(0,t)}\Big)\notag\\
\leq &\mathcal {E}(0)e^{ -\frac{\mu_m}{2}t}+ C_2\Big(\|d_{1}\|^2_{L^{\infty}(0,t;L^2(0,l))} +\|d_{2}\|^2_{L^{\infty}(0,t;L^2(0,l))} \notag\\
     &+\|d_{3}\|_{L^{\infty}(0,t)}+\|d_{4}\|_{L^{\infty}(0,t)}\Big),\notag
\end{align*}
where $C_2>0$ is a constant independent of $t$.
%
%
We conclude by \eqref{Hugo 25} and $\varepsilon_m<\frac{1}{K_m}$ that
\begin{align*}
0\leq E(t)\leq &\frac{1}{1-K_m\varepsilon_m}\mathcal {E}(t)\notag\\
\leq & \frac{1}{1-K_m\varepsilon_m}\mathcal {E}(0)e^{ -\frac{\mu_m}{2}t}
      +\frac{C_2}{1-K_m\varepsilon_m}\Big(\|d_{3}\|_{L^{\infty}(0,t)}+\|d_{4}\|_{L^{\infty}(0,t)}\notag\\
     & +\|d_{1}\|^2_{L^{\infty}(0,t;L^2(0,l))}+\|d_{2}\|^2_{L^{\infty}(0,t;L^2(0,l))}\Big)\notag\\
\leq & \frac{1+K_m\varepsilon_m}{1-K_m\varepsilon_m}E(0)e^{ -\frac{\mu_m}{2}t}
       +\frac{C_2}{1-K_m\varepsilon_m}\Big(\|d_{3}\|_{L^{\infty}(0,t)}+\|d_{4}\|_{L^{\infty}(0,t)}\notag\\
     & +\|d_{1}\|^2_{L^{\infty}(0,t;L^2(0,l))}+\|d_{2}\|^2_{L^{\infty}(0,t;L^2(0,l))}\Big)\notag\\
\leq & C_3E(0)e^{ -\frac{\mu_m}{2}t} + C_3\Big(\|d_{3}\|_{L^{\infty}(0,t)}+\|d_{4}\|_{L^{\infty}(0,t)}\notag\\
     & +\|d_{1}\|^2_{L^{\infty}(0,t;L^2(0,l))} +\|d_{2}\|^2_{L^{\infty}(0,t;L^2(0,l))}\Big),\notag
\end{align*}
where $C_3>0$ is a constant independent of $t$. {}{Noting that since $\|X(\cdot,t)\|_{\mathcal{H}}^2=2E(t)$ for all $t\geq 0$, and $(a+b)^{\frac{1}{2}}\leq a^{\frac{1}{2}}+b^{\frac{1}{2}}$ for all $a\geq 0,b\geq 0$, the claimed result \eqref{main result 1} follows immediately.}

Similarly, we get by \eqref{iISS} and {}{Comparison Lemma}
\begin{align*}
\mathcal{E}(t)\leq &\mathcal {E}(0)e^{ -\frac{\mu_m}{2}t} + {}{C_4}\int_{0}^t\big(|d_{3}(s)|+|d_{4}(s)|\big)\text{d}s \notag\\
                   & +{}{C_4}\int_{0}^t\big(\|d_{1}(\cdot,s)\|^2+\|d_{2}(\cdot,s)\|^2\big)\text{d}s.\notag
\end{align*}
where ${}{C_4}>0$ is a constant independent of $t$. Hence, it follows from \eqref{Hugo 25} that
\begin{align*}
E(t)\leq & {}{C_5}E(0)e^{ -\frac{\mu_m}{2}t}+ {}{C_5}\int_{0}^t\big(|d_{3}(s)|+|d_{4}(s)|\big)\text{d}s \notag\\
         & +{}{C_5}\int_{0}^t\big(\|d_{1}(\cdot,s)\|^2+\|d_2(\cdot,s)\|^2\big)\text{d}s.\notag
   \end{align*}
where ${}{C_5>0}$ is a constant independent of $t$. {}{Finally we conclude \eqref{main result 2} as above.}

\end{proof}
{}{Note that 
\begin{align*}
&\|\phi(\cdot,t)\|_{L^{\infty}(0,l)}^2\leq 2l\|\phi_y\|^2\leq \frac{4l}{a_2}E(t),\notag\\
&\|w_y(\cdot,t)\|_{L^{\infty}(0,l)}^2\leq \frac{l^2}{2}\|w_{yy}\|^2\leq \frac{l^2}{a_1}E(t),\\
&\|w(\cdot,t)\|_{L^{\infty}(0,l)}^2\leq 2l\|w_y\|^2\leq l^3\|w_{yy}\|^2 \leq \frac{2l^3}{a_1}E(t).
   \end{align*}
  We have the following boundedness estimates for the solution of System~\eqref{system 1426}.}
\begin{corollary} {}{Under the same assumptions as in Theorem \ref{Th: EISpS}, the following estimates hold true:
\begin{align*}
&\|w(\cdot, t)\|^2_{L^\infty(0,l)}+\|w_y(\cdot, t)\|^2_{L^\infty(0,l)}+\|\phi(\cdot, t)\|^2_{L^\infty(0,l)}\notag\\
\leq & CE(0)e^{-\frac{\mu_m}{2}t} + C\Big(\|d_{1}\|^2_{L^\infty(0,t;L^2(0,l))} +\|d_{2}\|^2_{L^{\infty}(0,t;L^2(0,l))}\Big)\notag\\
     & +C\Big(\|d_{3}\|_{L^{\infty}(0,t)}+\|d_{4}\|_{L^{\infty}(0,t)}\Big),
\end{align*}
and
\begin{align*}
&\|w(\cdot, t)\|^2_{L^\infty(0,l)}+\|w_y(\cdot, t)\|^2_{L^\infty(0,l)}+\|\phi(\cdot, t)\|^2_{L^\infty(0,l)}\notag\\
\leq & CE(0)e^{-\frac{\mu_m}{2}t} + C\int_{0}^t\Big(\|d_{1}(\cdot,s)\|_{L^2(0,l)}^2+\|d_{2}(\cdot,s)\|_{L^2(0,l)}^2\Big)\text{d}s\notag\\
     & +C\int_{0}^t\big(|d_{3}(s)|+|d_{4}(s)|\big)\text{d}s ,
\end{align*}
where $C>0$ and $\mu_m>0$ are some constants independent of $t$.}
\end{corollary}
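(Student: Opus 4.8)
The plan is to reduce the claimed $L^\infty$-bounds to the energy estimate for $E(t)$ already obtained in the proof of Theorem~\ref{Th: EISpS}, using one-dimensional Sobolev embeddings adapted to the homogeneous boundary conditions at $y=0$. Since $w(0,t)=w_y(0,t)=\phi(0,t)=0$, each of $\phi(\cdot,t)$, $w_y(\cdot,t)$, $w(\cdot,t)$ vanishes at $y=0$, so the fundamental theorem of calculus together with Cauchy--Schwarz (applied once for $\phi$ and for $w_y$, and twice for $w$) gives exactly the embedding inequalities recorded immediately above the statement, namely $\|\phi(\cdot,t)\|^2_{L^\infty(0,l)}\le 2l\|\phi_y(\cdot,t)\|^2$, $\|w_y(\cdot,t)\|^2_{L^\infty(0,l)}\le \tfrac{l^2}{2}\|w_{yy}(\cdot,t)\|^2$, and $\|w(\cdot,t)\|^2_{L^\infty(0,l)}\le l^3\|w_{yy}(\cdot,t)\|^2$. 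Since the definition \eqref{+12} of $E$ yields $\|w_{yy}(\cdot,t)\|^2\le \tfrac{2}{a_1}E(t)$ and $\|\phi_y(\cdot,t)\|^2\le \tfrac{2}{a_2}E(t)$, adding the three inequalities produces $\|w(\cdot,t)\|^2_{L^\infty(0,l)}+\|w_y(\cdot,t)\|^2_{L^\infty(0,l)}+\|\phi(\cdot,t)\|^2_{L^\infty(0,l)}\le C' E(t)$ with $C'=\tfrac{2l^3}{a_1}+\tfrac{l^2}{a_1}+\tfrac{4l}{a_2}$ depending only on $a_1,a_2,l$.

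It then suffices to substitute the two energy bounds derived inside the proof of Theorem~\ref{Th: EISpS}. The comparison-lemma argument there produces, on the one hand, $E(t)\le C_3 E(0)e^{-\mu_m t/2}+C_3\big(\|d_1\|^2_{L^\infty(0,t;L^2(0,l))}+\|d_2\|^2_{L^\infty(0,t;L^2(0,l))}+\|d_3\|_{L^\infty(0,t)}+\|d_4\|_{L^\infty(0,t)}\big)$ and, on the other hand, $E(t)\le C_5 E(0)e^{-\mu_m t/2}+C_5\int_0^t\big(\|d_1(\cdot,s)\|^2+\|d_2(\cdot,s)\|^2\big)\,\mathrm{d}s+C_5\int_0^t\big(|d_3(s)|+|d_4(s)|\big)\,\mathrm{d}s$, with $C_3,C_5,\mu_m>0$ independent of $t$. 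Multiplying each of these through by $C'$ and setting $C:=C'\max\{C_3,C_5\}$ turns them into the two displayed estimates of the corollary, the exponential decay rate $\mu_m/2$ being unchanged.

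I do not expect any genuine obstacle: the result is a straightforward corollary of Theorem~\ref{Th: EISpS}. The only points to watch are (i) that the constants $C'$, $C_3$, $C_5$ and the rate $\mu_m$ are all independent of $t$, so that the resulting constant $C$ is $t$-independent as asserted, and (ii) that the energy bounds invoked are those established for $E(t)$ itself (equivalently for $\tfrac12\|X(\cdot,t)\|^2_{\mathcal H}$) inside the proof of Theorem~\ref{Th: EISpS}, so that no separate treatment of the $\mathcal H$-norm is required before applying the spatial embeddings.
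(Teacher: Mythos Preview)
Your proposal is correct and follows essentially the same approach as the paper: the paper records the three embedding inequalities $\|\phi\|_{L^\infty}^2\le\frac{4l}{a_2}E(t)$, $\|w_y\|_{L^\infty}^2\le\frac{l^2}{a_1}E(t)$, $\|w\|_{L^\infty}^2\le\frac{2l^3}{a_1}E(t)$ immediately before the corollary and leaves the remainder implicit, while you spell out the summation and the substitution of the two $E(t)$-bounds from the proof of Theorem~\ref{Th: EISpS}. There is no substantive difference.
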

Note that the boundedness assumption on $d_3$ and $d_4$ can be relaxed and the structural conditions in \eqref{assumption} can be simplified. Indeed, we estimate $I_4$ and $I_5$ in Appendix~\ref{Appendix: b} as follows:
 \begin{align*}
I_4&:=-(w_{t}(l,t)+\varepsilon_2w(l,t))d_{3}(t)\notag\\
&\leq
\frac{1}{2r_{13}}d_3^2(t)+\frac{r_{13}}{2}(w^2_{t}(l,t)+\varepsilon_2^2w^2(l,t))\notag\\
&\leq \frac{1}{2r_{13}}d_3^2(t)+lr_{13}(\|w_{ty}\|^2+\varepsilon_2^2\|w_{y}\|^2)\notag\\
&\leq\frac{1}{2r_{13}}d_3^2(t)+\frac{l^3r_{13}}{2}(\|w_{tyy}\|^2+\varepsilon_2^2\|w_{yy}\|^2),\ \forall r_{13}>0,
   \end{align*}
and
\begin{align*}
I_5&:=(\phi_{t}(l,t)+\varepsilon_1\phi(l,t))d_{4}(t)\notag\\
&\leq \frac{1}{2r_{14}}d_4^2(t)+l^3r_{14}(\|\phi_{ty}\|^2+\varepsilon_1^2\|\phi_{y}\|^2),\ \forall r_{14}>0.
\end{align*}
 Then the parameters $\Lambda_{2},\Lambda_{4},\Lambda_{5},\Lambda_{6},\Lambda_{7}$ in Appendix~\ref{Appendix: b} become (other parameters retain unchanged)
 \begin{align*}
\Lambda_2&=\frac{\varepsilon_2}{2r_{12}}+\lambda_2+\frac{\varepsilon_2^2l^3r_{13}}{2},\\
\Lambda_4&=\frac{\varepsilon_1}{2r_{11}}
+\lambda_4+\lambda_8+lr_{14}\varepsilon_1^2, \\
\Lambda_5&=\frac{\varepsilon_1}{2}r_{11}+lr_{14}, \\
\Lambda_6&=\frac{\varepsilon_2}{2}r_{12}+\frac{l^3r_{13}}{2}, \\
\Lambda_7&=\lambda_5+\lambda_9+\frac{1}{2r_{13}}d^2_3(t)+\frac{1}{2r_{14}}d^2_4(t).
\end{align*}
If we replace the structural conditions \eqref{assumption} by
\begin{subequations}\label{assumption 2}
\begin{align}
&\varepsilon_2+\Lambda_1+\frac{4}{l^4}(\Lambda_6-b_1)
<0,\\
&\Lambda_2-\varepsilon_2a_1<0,\\
&\varepsilon_1
+\Lambda_3+\frac{2}{l^2}(\Lambda_5-b_2)<0,\\
&\Lambda_4-\varepsilon_1a_2<0,
\end{align}
\end{subequations}
for some $r_{1},r_{2},...,r_{14}, \varepsilon_1,\varepsilon_2$, and relax the boundedness of $d_{3}$ and $d_{4}$, then we have:
\begin{theorem}\label{Th: EISS}
Under the structural assumptions given in \eqref{assumption 2} and assuming that $d_{1},d_{2}\in C^1(\mathbb{R}_{\geq 0};L^2(0,l))$ and ${d_{3},d_4}\in C^2(\mathbb{R}_{\geq 0})$, System~\eqref{system 1426} is EISS and EiISS, having the following estimates:
{}{\begin{align*}
\|X(\cdot,t)\|_{\mathcal{H}}
\leq & Ce^{ -\frac{\mu_m}{4}t}\|X_0\|_{\mathcal{H}} + C\Big(\|d_{1}\|_{L^\infty(0,t;L^2(0,l))} +\|d_{2}\|_{L^{\infty}(0,t;L^2(0,l))}\Big)\notag\\
     & +C\Big(\|d_{3}\|_{L^{\infty}(0,t)}+\|d_{4}\|_{L^{\infty}(0,t)}\Big),
\end{align*}
and
\begin{align*}
\|X(\cdot,t)\|_{\mathcal{H}}
\leq & Ce^{ -\frac{\mu_m}{4}t}\|X_0\|_{\mathcal{H}}+ C\bigg(\int_{0}^t\|d_{1}(\cdot,s)\|_{L^2(0,l)}^2 \text{d}s\bigg)^{\frac{1}{2}}\notag\\
&+C\bigg(\int_{0}^t\|d_{2}(\cdot,s)\|_{L^2(0,l)}^2\text{d}s\bigg)^{\frac{1}{2}}
      +C\bigg(\int_{0}^td_{3}^2(s)\text{d}s\bigg)^{\frac{1}{2}}\notag\\
&+C\bigg(\int_{0}^td_{4}^2(s)\text{d}s\bigg)^{\frac{1}{2}},
\end{align*}
where $C>0$ and $\mu_m>0$ are constants independent of $t$.}
\end{theorem}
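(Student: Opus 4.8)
The plan is to run the Lyapunov argument used for Theorem~\ref{Th: EISpS} almost verbatim, the only change being how the boundary disturbance terms are absorbed. I would keep the same augmented energy $\mathcal{E}(t)$ of \eqref{+13}, choose $0<\varepsilon_1,\varepsilon_2<1/K_m$ so that the equivalence \eqref{Hugo 25} between $\mathcal{E}(t)$ and $E(t)$ remains valid, and reproduce the identities \eqref{+8}--\eqref{+11} together with the computation of $\frac{\text{d}}{\text{d}t}\mathcal{E}(t)$. Instead of bounding the two boundary contributions $I_4=-(w_t(l,t)+\varepsilon_2 w(l,t))d_3(t)$ and $I_5=(\phi_t(l,t)+\varepsilon_1\phi(l,t))d_4(t)$ by the crude estimates involving $\|d_3\|_{L^\infty(\mathbb{R}_{\geq 0})}$ and $\|d_4\|_{L^\infty(\mathbb{R}_{\geq 0})}$, I would use the Young-type bounds displayed just before the statement, i.e.\ split off $\tfrac{1}{2r_{13}}d_3^2(t)$ and $\tfrac{1}{2r_{14}}d_4^2(t)$ and control the trace terms $w_t^2(l,t),w^2(l,t),\phi_t^2(l,t),\phi^2(l,t)$ by $\|w_{tyy}\|^2,\|w_{yy}\|^2,\|\phi_{ty}\|^2,\|\phi_y\|^2$ via the Sobolev/Poincar\'e chains available because $w(0,t)=w_y(0,t)=\phi(0,t)=0$. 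This is precisely the step where the boundedness hypothesis on $d_3,d_4$ is no longer needed and where no time derivatives of $d_3,d_4$ ever enter.

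With these substitutions the parameters $\Lambda_2,\Lambda_4,\Lambda_5,\Lambda_6,\Lambda_7$ of Appendix~\ref{Appendix: b} are replaced by the modified expressions listed before the statement (the remaining $\Lambda_i$ unchanged), and collecting terms exactly as in \eqref{1716}, using $\|\phi_{ty}\|^2\geq\tfrac{2}{l^2}\|\phi_t\|^2$ and $\|w_{tyy}\|^2\geq\tfrac{4}{l^4}\|w_t\|^2$ (which require $\Lambda_5<b_2$, $\Lambda_6<b_1$), I would arrive at
\begin{align*}
\frac{\text{d}}{\text{d}t}\mathcal{E}(t)\leq\;& \Big(\varepsilon_2+\Lambda_1+\tfrac{4}{l^4}(\Lambda_6-b_1)\Big)\|w_t\|^2+(\Lambda_2-\varepsilon_2 a_1)\|w_{yy}\|^2 \\
&+\Big(\varepsilon_1+\Lambda_3+\tfrac{2}{l^2}(\Lambda_5-b_2)\Big)\|\phi_t\|^2+(\Lambda_4-\varepsilon_1 a_2)\|\phi_y\|^2+\Lambda_7 .
\end{align*}
Under \eqref{assumption 2} the four bracketed coefficients are strictly negative; letting $\mu_m>0$ be the minimum of their moduli and invoking \eqref{Hugo 25} gives $\frac{\text{d}}{\text{d}t}\mathcal{E}(t)\leq -\mu_m E(t)+\Lambda_7\leq -\tfrac{\mu_m}{2}\mathcal{E}(t)+\Lambda_7$, and since $\Lambda_7=\lambda_5+\lambda_9+\tfrac{1}{2r_{13}}d_3^2(t)+\tfrac{1}{2r_{14}}d_4^2(t)$ with $\lambda_5,\lambda_9$ carrying the $\|d_1(\cdot,t)\|^2$ and $\|d_2(\cdot,t)\|^2$ contributions of $f_1,f_2$, I obtain $\frac{\text{d}}{\text{d}t}\mathcal{E}(t)\leq -\tfrac{\mu_m}{2}\mathcal{E}(t)+C(\|d_1(\cdot,t)\|^2+\|d_2(\cdot,t)\|^2+d_3^2(t)+d_4^2(t))$, i.e.\ the analogue of \eqref{iISS}--\eqref{BISS} with $|d_3(t)|,|d_4(t)|$ replaced by $d_3^2(t),d_4^2(t)$.

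From here the conclusion is routine. The Comparison Lemma \cite[Lemma~3.4]{Khalil2001} yields $\mathcal{E}(t)\leq\mathcal{E}(0)e^{-\mu_m t/2}+C\int_0^t e^{-\mu_m(t-s)/2}(\|d_1(\cdot,s)\|^2+\|d_2(\cdot,s)\|^2+d_3^2(s)+d_4^2(s))\,\text{d}s$; estimating the convolution by $\tfrac{2}{\mu_m}$ times the corresponding $L^\infty(0,t)$-norms gives the EISS bound, and estimating it by the bare integral $\int_0^t(\cdot)\,\text{d}s$ gives the EiISS bound. Passing back to $E(t)$ through \eqref{Hugo 25}, using $\|X(\cdot,t)\|^2_{\mathcal{H}}=2E(t)$ and $(a+b)^{1/2}\leq a^{1/2}+b^{1/2}$, and taking square roots (which halves the decay exponent to $\mu_m/4$ and turns $d_3^2,d_4^2$ and the $L^2$-in-time quantities into the powers displayed) produces the two stated estimates, exactly as at the end of the proof of Theorem~\ref{Th: EISpS}.

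The main obstacle — and the genuine content of the theorem — is the first step: obtaining the trace estimates for $I_4,I_5$ so that the boundary terms are entirely absorbed into the dissipative quantities $b_1\|w_{tyy}\|^2$, $\varepsilon_2 a_1\|w_{yy}\|^2$, $b_2\|\phi_{ty}\|^2$, $\varepsilon_1 a_2\|\phi_y\|^2$ without differentiating $d_3,d_4$ in time. A secondary bookkeeping point is that the modified $\Lambda_5,\Lambda_6$ must remain below $b_2,b_1$; this follows from \eqref{assumption 2} itself (the coefficients $\Lambda_1,\Lambda_3$ there being nonnegative), and in any case $\Lambda_5,\Lambda_6$ shrink as $r_{11},r_{12},r_{13}\to0$ at the sole cost of enlarging the gain constant $C$, so together with the compatibility of \eqref{assumption 2} with $\varepsilon_1,\varepsilon_2<1/K_m$ — precisely what the hypotheses postulate — no further difficulty arises.
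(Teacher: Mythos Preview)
Your proposal is correct and follows essentially the same approach as the paper. The paper does not write out a separate proof for Theorem~\ref{Th: EISS}; it only records, just before the statement, the Young-type estimates for $I_4,I_5$ and the resulting modified $\Lambda_2,\Lambda_4,\Lambda_5,\Lambda_6,\Lambda_7$, and then tacitly relies on the argument of Theorem~\ref{Th: EISpS} with these substitutions --- precisely what you outline, including the passage through \eqref{Hugo 25}, the Comparison Lemma, and the square-root step that halves the exponent.
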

\begin{remark}
If $d_3(t)=k_1(w_{t}(l,t)+\varepsilon_2w(l,t))
$ and $d_4(t)=-k_2(\phi_{t}(l,t)+\varepsilon_1\phi(l,t))$ appear as the feedback controls with constants $k_1\geq 0$ and $k_2\geq 0$, and $(w,\phi)$ is the solution of System~\eqref{system 1426}, then the following estimates hold:
\begin{align*}
     E(t)\leq  C{E(0)}e^{-\frac{\mu_m}{2}t} +C\Big(\|d_{1}\|^2_{L^\infty(0,t;L^2(0,l))} +\|d_{2}\|^2_{L^{\infty}(0,t;L^2(0,l))}\Big),
\end{align*}
and
\begin{align*}
     E(t)\leq C{E(0)}e^{-\frac{\mu_m}{2}t}+C{\int_{0}^t\Big(\|d_{1}(\cdot,s)\|_{L^2(0,l)}^2
+\|d_{2}(\cdot,s)\|_{L^2(0,l)}^2\Big)\text{d}s},
   \end{align*}
where $C>0$ and $\mu_m>0$ are some constants independent of $t$, $d_{1}$, and $d_{2}$.

Indeed, in this case, $I_4$ and $I_5$ given in \eqref{+15} and \eqref{+16} in Appendix~\ref{Appendix: b} become
$I_4=-(w_{t}(l,t)+\varepsilon_2w(l,t))(a_1w_{yy}+b_1w_{tyy})_y(l,t)=-k_1(w_{t}(l,t)+\varepsilon_2w(l,t))^2\leq 0$ and $I_5=(\phi_{t}(l,t)+\varepsilon_1\phi(l,t))(a_2\phi_y+b_2\phi_{ty})(l,t)=-k_2(\phi_{t}(l,t)+\varepsilon_1\phi(l,t))^2\leq 0$. Then taking in \eqref{Eq: Lambda def} $M_1=M_2=0$ and proceeding as the proof of Theorem~\ref{Th: EISpS}, one may get the desired results.

Note that, under the above assumptions, a disturbance-free setting (i.e. $d_{1}=d_{2}=0$ in \eqref{1426a} and \eqref{1426b}) was considered in \cite{Lhachemi:2017}, and the exponential stability was obtained.
\end{remark}

\begin{remark}
A more generic setting is to replace the boundary conditions given in \eqref{1426c} by $(a_1w_{yy}+b_1w_{tyy})_y(l,t)= d_3(t)+k_1(w_{t}(l,t)+\varepsilon_2w(l,t))$ and $(a_2\phi_y+b_2\phi_{ty})(l,t)$ $= d_4(t)-k_2(\phi_{t}(l,t) + \varepsilon_1\phi(l,t))$, where $d_3(t),d_4(t)$ are disturbances, $k_1\geq 0$ and $k_2\geq 0$. Under the same assumptions on $d_1$, $d_2$, $d_3$, and $d_4$ as in Theorem~\ref{Th: EISpS} or Theorem~\ref{Th: EISS}, if $(w,\phi)$ is the solution of System \eqref{system 1426} with the above boundary conditions, then it can verify that the ISS and iISS properties given in Theorem~\ref{Th: EISpS} or Theorem~\ref{Th: EISS} hold.
\end{remark}
\begin{remark} {}{As pointed out in \cite{Karafyllis:2016a, Zheng:2017}, the assumptions on the continuities of the disturbances are required for assessing the well-posedness of
the considered system. However, they are only sufficient conditions
and can be weakened if solutions in a weak sense are considered.
Moreover, as shown in the proof of Theorem 2, the assumptions
on the continuities of disturbances can eventually be relaxed for the establishment of ISS estimates.}
\end{remark}

{}{}
{
\section{Simulation results}\label{Sec: Simulation}
The ISS properties of System~\eqref{system 1426} are illustrated in this section. Numerical simulations are performed based on the Galerkin method. The numerical values of the parameters are set to $a_1 = 3$, $b_1=0.3$, $c_1=0.06$, $p_1=q_1=0.04$,
$a_2=5$, $b_2=0.5$, $c_2=0.08$, $p_2=q_2=0.06$, and $l=1$. The four perturbation signals are selected as follows:
\begin{align*}
d_1(y,t) &= 2(1+e^{-0.3t})(1 + \sin(0.5\pi t) + 3 \sin(5\pi t) )y, \\
d_2(y,t) &= - 0.2(1+e^{-0.3t})(1 + \sin(0.5\pi t) + 3 \sin(5\pi t) )y, \\
d_3(t) &= (1+2e^{-0.2t}) \cos(0.2 \pi t) \sin(3 \pi t), \\
d_4(t) &= 0.5(1+e^{-0.2t}) \sin(0.2 \pi t) \cos(3 \pi t),
\end{align*}
while the initial conditions are set to $w_0 = 0.15y^2(y-3l)/(6l^2) \,\mathrm{m}$ and $\phi_0(y) = 8 y^2/l^2 \,\mathrm{deg}$. The system response is depicted in Fig.~\ref{fig: sim} for the flexible displacements over the time and spatial domains. The behavior at the tip, exhibiting the displacements with maximal amplitude, are depicted in Fig.~\ref{fig: tip}. It can be seen that the non zero initial condition vanishes due to the exponential stability of the underlying $C_0$-semigroup. Furthermore, the amplitude of the flexible displacements under bounded in-domain and boundary perturbations remains bounded, which confirms the theoretical analysis.

%

\section{Concluding Remarks}\label{Sec: Conclusion}
The present work established the exponential input-to-state stability (EISS) and exponential integral input-to-state stability (EiISS) of a system of boundary controlled partial differential equations (PDEs) with respect to boundary and in-domain disturbances. Compared to the ISS property with respect to in-domain disturbances, the case of boundary disturbances is more challenging due to essentially regularity issues. This difficulty has been overcome by using \emph{a priori} estimates of the solution to the original PDEs, which leads to ISS gains in the expected form. It should be noted that the Lyapunov functional candidate used in this work is greatly inspired by the results reported \cite{Lhachemi:2017}. As a further direction of research, it may be interesting to introduce and develop tools allowing the establishment of the ISS property for a wider range of problems in a more systematic manner, such as the attempt presented in \cite{Zheng:2017}.

\appendix
\section{Proof of \eqref{1716}}\label{Appendix: b}

By Young's inequality (see, e.g., \cite[Appendix B.2.d]{Evans:2010}) and Poincar\'{e} inequality (see, e.g., \cite[Chap.~2, Remark~2.2]{Krstic:2008}), we have
\begin{align}
\int_{0}^{l} \phi ^2\text{d}y
  \leq & \frac{4l^2}{\pi}\int_{0}^{l} \phi_y^2\text{d}y\leq \frac{l^2}{2}\int_{0}^{l} \phi_y^2\text{d}y,\notag\\
\int_{0}^{l} \phi w_t\text{d}y
  \leq & \frac{1}{2r_1}\int_{0}^{l} \phi^2\text{d}y +\frac{r_1}{2}\int_{0}^{l}w_t^2\text{d}y
  \leq \frac{l^2}{4r_1}\int_{0}^{l}\phi_y^2\text{d}y +\frac{r_1}{2}\int_{0}^{l} w_t^2\text{d}y,\notag\\
\int_{0}^{l}\phi_t w_t\text{d}y
  \leq &\frac{1}{2r_2}\int_{0}^{l}\phi_t^2\text{d}y +\frac{r_2}{2}\int_{0}^{l}w_t^2\text{d}y,\notag\\
\int_{0}^{l}\phi w\text{d}y
  \leq &\frac{1}{2r_3}\int_{0}^{l}\phi^2\text{d}y +\frac{r_3}{2}\int_{0}^{l}w^2\text{d}y
  \leq \frac{l^2}{4r_3}\int_{0}^{l}\phi_y^2\text{d}y +\frac{r_3l^4}{8}\int_{0}^{l}w_{yy}^2\text{d}y,\notag\\
\int_{0}^{l}\phi_t w\text{d}y
  \leq &\frac{1}{2r_4}\int_{0}^{l}\phi_t^2\text{d}y +\frac{r_4}{2}\int_{0}^{l}w^2\text{d}y
  \leq \frac{1}{2r_4}\int_{0}^{l}\phi_t^2\text{d}y +\frac{r_4l^4}{8}\int_{0}^{l}w_{yy}^2\text{d}y,\notag\\
\int_{0}^{l}ww_t\text{d}y
  \leq &\frac{1}{2r_5}\int_{0}^{l}w^2\text{d}y +\frac{r_5}{2}\int_{0}^{l}w_t^2\text{d}y
  \leq \frac{l^4}{8r_5}\int_{0}^{l}w_{yy}^2\text{d}y +\frac{r_5}{2}\int_{0}^{l}w_{t}^2\text{d}y,\notag\\
\int_{0}^{l}\phi \phi_t\text{d}y
  \leq &\frac{1}{2r_6}\int_{0}^{l}\phi^2\text{d}y +\frac{r_6}{2}\int_{0}^{l}\phi_t^2\text{d}y
  \leq \frac{l^2}{4r_6}\int_{0}^{l}\phi_y^2\text{d}y +\frac{r_6}{2}\int_{0}^{l}\phi_t^2\text{d}y,\notag\\
\int_{0}^{l}d_{1} w_t\text{d}y
  \leq &\frac{\|d_{1}(\cdot,t)\|^2}{2r_7}+\frac{r_7}{2}\int_{0}^{l}w_t^2\text{d}y,\notag\\
\int_{0}^{l}d_{1} w\text{d}y
  \leq &\frac{\|d_{1}(\cdot,t)\|^2}{2r_8}+\frac{r_8}{2}\int_{0}^{l}w^2\text{d}y
  \leq \frac{\|d_{1}(\cdot,t)\|^2}{2r_8}+\frac{r_8l^4}{8}\int_{0}^{l}w_{yy}^2\text{d}y  ,\notag\\
\int_{0}^{l}d_{2} \phi_t\text{d}y
  \leq &\frac{\|d_{2}(\cdot,t)\|^2}{2r_9}+\frac{r_9}{2}\int_{0}^{l}\phi_t^2\text{d}y,\notag\\
\int_{0}^{l}d_{2} \phi\text{d}y
  \leq &\frac{\|d_{2}(\cdot,t)\|^2}{2r_{10}}+\frac{r_{10}}{2}\int_{0}^{l}\phi^2\text{d}y
  \leq \frac{\|d_{2}(\cdot,t)\|^2}{2r_{10}}+\frac{r_{10}l^2}{4}\int_{0}^{l}\phi_y^2\text{d}y,\notag\\
\int_{0}^{l}\phi_y\phi_{ty}\text{d}y
  \leq &\frac{1}{2r_{11}}\int_{0}^{l}\phi_y^2\text{d}y+\frac{r_{11}}{2}\int_{0}^{l}\phi_{ty}^2\text{d}y,\notag\\
\int_{0}^{l}w_{yy}w_{tyy}\text{d}y
  \leq &\frac{1}{2r_{12}}\int_{0}^{l}w_{yy}^2\text{d}y+\frac{r_{12}}{2}\int_{0}^{l}w_{tyy}^2\text{d}y.\notag
\end{align}
Then we get
\begin{align}
I_1:=&\int_{0}^{l}f_1(\phi,\phi_t,w_t,d_{1})(w_t+\varepsilon_2w)\text{d}y\notag\\
    =&\int_{0}^{l}(c_1\phi+p_{1}\phi_t+q_{1}w_t+d_{1})(w_t+\varepsilon_2w)\text{d}y\notag\\
\leq &\frac{1}{2}\left(c_1r_1+p_{1}r_2+2{}{-q_1}+r_7+\varepsilon_2{}{-q_1}r_5\right)\|w_t\|^2\notag\\
     &+\frac{\varepsilon_2l^4}{8}\bigg(c_1r_3+p_{1}r_4-\frac{{}{q_1}}{r_5}+r_8\bigg)\|w_{yy}\|^2\notag\\
     &+\frac{p_{1}}{2}\bigg(\frac{1}{r_2}+\frac{\varepsilon_2}{r_4}\bigg)\|\phi_t\|^2
      +\frac{c_1l^2}{4}\bigg(\frac{1}{r_1}+\frac{\varepsilon_2}{r_3}\bigg)\|\phi_y\|^2
      +\frac{\|d_{1}(\cdot,t)\|^2}{2}\bigg(\frac{1}{r_7}+\frac{\varepsilon_2}{r_8}\bigg) \notag\\
   :=&\lambda_1 \|w_t\|^2+\lambda_2 \|w_{yy}\|^2+\lambda_3 \|\phi_t\|^2+\lambda_4 \|\phi_y\|^2+\lambda_5,\label{+20}\\
I_2:=&\int_{0}^{l}f_2(\phi,\phi_t,w_t,d_{2})(\phi_t+\varepsilon_1\phi)\text{d}y\notag\\
 =&\int_{0}^{l}(c_2\phi+p_{2}\phi_t+q_{2}w_t+d_{2})(\phi_t+\varepsilon_1\phi)\text{d}y \notag\\
\leq &\frac{q_{2}}{2}\left(r_2+\varepsilon_1r_1\right)\|w_t\|^2+\frac{1}{2}\bigg(c_2r_6-
2{}{p_{2}}+\frac{q_{2}}{r_2}+ r_9 - \varepsilon_1{}{p_{2}}r_6 \bigg)\|\phi_t\|^2\notag\\
&  +\frac{l^2}{4}\bigg(\frac{c_2}{r_6}+2\varepsilon_1c_2
   -\frac{\varepsilon_1{}{p_{2}}}{r_6}+\frac{\varepsilon_1q_{2}}{r_1}+\varepsilon_1r_{10}\bigg)\|\phi_y\|^2
   +\frac{\|d_{2}(\cdot,t)\|^2}{2}\bigg(\frac{1}{r_9}+\frac{\varepsilon_1}{r_{10}}\bigg)\notag\\
:=&\lambda_6 \|w_t\|^2+\lambda_7 \|\phi_t\|^2+\lambda_8 \|\phi_y\|^2+\lambda_9,\label{+21}\\
I_3:=&\varepsilon_1 \int_{0}^{l}\phi_y\phi_{ty}\text{d}y+\varepsilon_2\int_{0}^{l}w_{yy}w_{tyy}\text{d}y\notag\\
 \leq & \frac{\varepsilon_1}{2}\bigg( \frac{1}{r_{11}}\|\phi_y\|^2+r_{11}\|\phi_{ty}\|^2\bigg)
       +\frac{\varepsilon_2}{2}\bigg(\frac{1}{r_{12}}\|w_{yy}\|^2+r_{12}\|w_{tyy}\|^2\bigg).
\end{align}
We shall estimate $(w_{t}(l,t)+\varepsilon_2w(l,t))d_{3}(t)
$ and $(\phi_{t}(l,t)+\varepsilon_1\phi(l,t))d_{4}(t)$. Note that for any $f \in H^{1}([0,l])$ with $f(0)=0$ there holds $ f^2(l)\leq 2l \|f_y\|^2$. We compute
\begin{align}\label{+15}
I_4&:=-(w_{t}(l,t)+\varepsilon_2w(l,t))d_{3}(t)\notag\\
&\leq
|d_{3}(t)||w_{t}(l,t)+\varepsilon_2w(l,t)|\notag\\
&\leq |d_{3}(t)|( \sqrt{2l}\|w_{ty}\|+\varepsilon_2\sqrt{2l}\|w_{y}\|)\notag\\
&\leq \sqrt{2l}|d_{3}(t)|(2+\|w_{ty}\|^2
+\varepsilon_2\|w_{y}\|^2 )\notag\\
&\leq \sqrt{2l}|d_{3}(t)|\bigg(2+\frac{l^2}{2}\|w_{tyy}\|^2
+\frac{\varepsilon_2l^2}{2}\|w_{yy}\|^2 \bigg).
   \end{align}
Similarly, we get
\begin{align}\label{+16}
I_5&:=(\phi_{t}(l,t)+\varepsilon_1\phi(l,t))d_{4}(t)\leq \sqrt{2l}|d_{4}(t)|\big(2+\|\phi_{ty}\|^2
+\varepsilon_1\|\phi_{y}\|^2 \big).
\end{align}
Finally, we have
\begin{align*}
&I_1\!+\!I_2\!+\!I_3\!+\!I_4\!+\!I_5 \notag\\
\leq &\Lambda_1\|w_{t}\|^2+\Lambda_2\|w_{yy}\|^2+
            \Lambda_3\|\phi_{t}\|^2+\Lambda_4\|\phi_{y}\|^2 +\Lambda_5\|\phi_{ty}\|^2+\Lambda_6\|w_{tyy}\|^2\!+\!\Lambda_7,
\end{align*}
where
\begin{subequations}\label{Eq: Lambda def}
\begin{align}
\Lambda_1&=\lambda_1+\lambda_6,\\
\Lambda_2&=\frac{\varepsilon_2}{2r_{12}}+\lambda_2+\frac{\varepsilon_2l^2}{2}\sqrt{2l} |d_3(t)|
        \leq\frac{\varepsilon_2}{2r_{12}}+\lambda_2+\frac{\varepsilon_2l^2}{2}\sqrt{2l} M_1:=\Lambda_2',\\
\Lambda_3&=\lambda_3+\lambda_7, \\
\Lambda_4&=\frac{\varepsilon_1}{2r_{11}}
+\lambda_4+\lambda_8+\varepsilon_1\sqrt{2l}|d_4(t)|
 \leq\frac{\varepsilon_1}{2r_{11}}
+\lambda_4+\lambda_8+\varepsilon_1\sqrt{2l}M_2:=\Lambda_4', \\
\Lambda_5&=\frac{\varepsilon_1}{2}r_{11}+\sqrt{2l}|d_4(t)|
           \leq\frac{\varepsilon_1}{2}r_{11}+\sqrt{2l}M_2:=\Lambda_5', \\
\Lambda_6&=\frac{\varepsilon_2}{2}r_{12}+\frac{l^2}{2}\sqrt{2l}|d_3(t)|
           \leq\frac{\varepsilon_2}{2}r_{12}+\frac{l^2}{2}\sqrt{2l}M_1:=\Lambda_6', \\
\Lambda_7&=\lambda_5+\lambda_9+2\sqrt{2l}(|d_3(t)|+|d_4(t)|)
           \leq \lambda_5+\lambda_9+2\sqrt{2l}(M_1+M_2):=\Lambda_7',\\
M_1&=\|d_3\|_{L^{\infty} \mathbb{R}_{\geq 0}},\\
M_2&=\|d_4\|_{L^{\infty} \mathbb{R}_{\geq 0}}.
\end{align}
\end{subequations}

\section{Proof of \eqref{1717}}\label{Appendix: c}
First, note that
\begin{subequations}\label{1723}
\begin{align}
&\varepsilon_2+\Lambda_1+\frac{4}{l^4}(\Lambda_6'-b_1)<0 \notag\\
\Leftrightarrow & \varepsilon_2 +\frac{1}{2}(c_1r_1+p_{1}r_2-2{}{q_1}+r_7-\varepsilon_2{}{q_1}r_5)\notag\\
&+\frac{q_{2}}{2}(r_2+\varepsilon_1r_1)+\frac{4}{l^4}\bigg(\frac{\varepsilon_2}{2}r_{12}
+\frac{l^2\sqrt{2l}}{2}M_1-b_1\bigg)<0,\label{1723c}\\
&\Lambda_2'-\varepsilon_2a_1<0  \notag\\
\Leftrightarrow & \frac{1}{2r_{12}}+\frac{l^4}{8}\bigg(c_1r_3+p_{1}r_4-\frac{{}{q_1}}{r_5}+ r_8\bigg) +\frac{l^2\sqrt{2l}}{2}M_1-a_1<0,\label{1723d}\\
&\varepsilon_1 +\Lambda_3+\frac{2}{l^2}(\Lambda_5-b_2)<0 \notag\\
\Leftrightarrow & \varepsilon_1 +\frac{1}{2}\bigg(c_2r_6-2{}{p_{2}}+\frac{q_{2}}{r_2}+r_9-\varepsilon_1{}{p_{2}}r_6 \bigg)\notag\\
& +\frac{p_{1}}{2}\bigg(\frac{1}{r_2}+\frac{\varepsilon_2}{r_4}\bigg)
  +\frac{2}{l^2}\bigg(\frac{\varepsilon_1}{2}r_{11}+\sqrt{2l}M_2-b_2\bigg)<0,\label{1723e}\\
& \Lambda_4'-\varepsilon_1a_2\!<0 \notag\\
\Leftrightarrow & \frac{\varepsilon_1}{2r_{11}}+\frac{c_1l^2}{4}\bigg(\!\frac{1}{r_1}
+\frac{\varepsilon_2}{r_3}\!\bigg)+\varepsilon_1\sqrt{2l}M_2-\varepsilon_1a_2 \notag\\
&+\frac{l^2}{4}\bigg(\!\frac{c_2}{r_6}+
2\varepsilon_1c_2-\frac{\varepsilon_1{}{p_{2}}}{r_6}+\frac{\varepsilon_1{}{q_{2}}}{r_1}
+\varepsilon_1r_{10}\!\bigg)  <0 ,\label{1723f}\\
\eqref{17c}&\Rightarrow \eqref{17b}\ \  \text{and}\ \ \eqref{17e}\Rightarrow \eqref{17a}.
\end{align}
\end{subequations}
It suffices to prove the right hand side of \eqref{1723c}-\eqref{1723f}. 

Indeed, we get from \eqref{8b}
\begin{align}
&\frac{c_1+c_2}{2}(l^2K_m+\sqrt{l})+\sqrt{2l}M_2+\frac{l}{2}+\frac{l^2}{2}(c_2{}{-p_{2}+q_{2}})\notag\\
\leq &\frac{c_1+c_2}{2}(K_m+1)(l^2+\sqrt{l})+\sqrt{2l}M_2+\frac{l}{2} +\frac{l^2}{2}(c_2{}{-p_{2}+q_{2}})\notag\\
\leq &(K_m+1)(l^2+\sqrt{l})\bigg(\frac{c_1+c_2}{2}+\sqrt{2}M_2+\frac{1}{2} +\frac{c_2{}{-p_{2}+q_{2}}}{2}\bigg)\notag\\
\leq &\sqrt{l}(1+l\sqrt{l})(K_m+1)(1+c_1+{}{q_{2}}+c_2{}{-p_{2}}+\sqrt{2}M_2)\notag\\
\leq &\sqrt{2l}(1+l\sqrt{l})(K_m+1)(1+c_1+{}{q_{2}}+c_2{}{-p_{2}}+M_2)\notag\\
< & a_2,\notag
\end{align}
which implies
\begin{align}\label{+24}
\frac{c_1+c_2}{2}\sqrt{l}+\sqrt{2l}M_2+\frac{l}{2}+\frac{l^2}{2}(c_2{}{-p_{2}+q_{2}})<a_2,
\end{align}
and
\begin{align}\label{+25}
\frac{c_1+c_2}{2}l^2K_m+\sqrt{2l}M_2+\frac{l}{2}+\frac{l^2}{2}(c_2{}{-p_{2}+q_{2}})<a_2.
\end{align}

Let
\begin{align*}
\varepsilon_0=\dfrac{\dfrac{(c_1+c_2)l^2}{2}}{a_2-\sqrt{2l}M_2-\dfrac{l}{2}-\dfrac{l^2}{2}(c_2{}{-p_{2}+q_{2}})}.
\end{align*}
By \eqref{+24} and \eqref{+25}, we have $\frac{(c_1+c_2)l^2}{2a_2}<\varepsilon_0 <\min\Big\{\frac{1}{K_m},l\sqrt{l}\Big\}$.

We get from \eqref{8c}
\begin{align}
(c_1+p_{1}{}{-4q_1}+q_{2})l^4+q_{2}l^4\varepsilon_0+8l^2\sqrt{2l}M_1<16b_1.\label{+26}
\end{align}
Indeed, we can compute
\begin{align*}
&(c_1+p_{1}{}{-4q_1}+q_{2})l^4+q_{2}l^4\varepsilon_0+8l^2\sqrt{2l}M_1\notag\\
\leq &(c_1+p_{1}{}{-4q_1}+q_{2})l^4+q_{2}l^4l\sqrt{l} +8l^2\sqrt{2l}M_1\notag\\
 =& l^2\sqrt{l}\bigg( l\sqrt{l}(c_1+p_{1}{}{-4q_1}+q_{2} )+l^3q_{2}+8\sqrt{2}M_1\bigg)\notag\\
\leq & l^2\sqrt{l}(1+l^3)( c_1+p_{1}{}{-4q_1}+q_{2} +8\sqrt{2}M_1)\notag\\
\leq & 8l^2\sqrt{l}(1+l^3)( c_1+p_{1}{}{-q_1}+q_{2} +\sqrt{2}M_1)\notag\\
\leq & 8l^2\sqrt{2l}(1+l^3)( c_1+p_{1}{}{-q_1}+q_{2} +M_1)\notag\\
< & 16b_1.
\end{align*}

We get from \eqref{8d}
\begin{align}
&l^2(p_{1}+\frac{c_2}{4}{}{-p_{2}}+q_{2})+l^2\bigg(1-\frac{{}{p_{2}}}{4}
 +\frac{1}{l^3}\bigg)\varepsilon_0+2\sqrt{2l}M_2< 2b_2. \label{+27}
\end{align}
Indeed, we can compute
\begin{align*}
&l^2\left(p_{1}+\frac{c_2}{4}{}{-p_{2}}+q_{2}\right)+l^2
\bigg(1-\frac{{}{p_{2}}}{4}
 +\frac{1}{l^3}\bigg)\varepsilon_0+2\sqrt{2l}M_2\notag\\
\leq & l^2\left(p_{1}+\frac{c_2}{4}{}{-p_{2}}+q_{2}\right)+l^2\bigg(1-\frac{{}{p_{2}}}{4}
 +\frac{1}{l^3}\bigg)l\sqrt{l} +2\sqrt{2l}M_2\notag\\
\leq & l^2\left(p_{1}+\frac{c_2}{4}{}{-p_{2}}+q_{2}\right)
+\sqrt{l}\bigg(\bigg(1-\frac{{}{p_{2}}}{4}\bigg)l^3+1\bigg)+2\sqrt{2l}M_2\notag\\
\leq & l^2\left(p_{1}+\frac{c_2}{4}{}{-p_{2}}+q_{2}\right)
+\sqrt{l}(l^3+1)\bigg(1-\frac{{}{p_{2}}}{4}\bigg)
+2\sqrt{2l}M_2\notag\\
\leq & \sqrt{l}(l^3+1)\bigg( p_{1}+\frac{c_2}{4}{}{-p_{2}}+q_{2}+1-\frac{{}{p_{2}}}{4}+2\sqrt{2}M_2\bigg)\notag\\
\leq & 2\sqrt{2l}(l^3+1)\bigg( 1+p_{1}+c_2{}{-p_{2}}+q_{2}+M_2\bigg)\notag\\
 < & 2b_2.
\end{align*}

Setting $ r_{11}=\frac{1}{l}$, we have
\begin{align}
&\frac{(c_1+c_2)l^2}{2}+\bigg( (c_2{}{-p_{2}+q_2})\frac{l^2}{2}+\frac{1}{2r_{11}}+\sqrt{2l}M_2-a_2\bigg)\varepsilon_0 =0,\label{+28}
\end{align}
with $(c_2{}{-p_{2}+q_2})\frac{l^2}{2}+\frac{1}{2r_{11}}+\sqrt{2l}M_2-a_2<0$ due to \eqref{+24} or \eqref{+25}.

Regarding \eqref{+26}, \eqref{+27} and \eqref{+28}, we may choose $\varepsilon_{1}\in \big(\varepsilon_{0},\min\{\frac{1}{K_m},l\sqrt{l}\}\big)$ such that
\begin{align}
&\frac{1}{4}(c_1\!+p_{1}\!{}{-4q_1}\!+q_{2})l^4\!+\!\frac{1}{4}q_{2}l^4\varepsilon_1+2l^2\sqrt{2l}M_1\!<\!4b_1,\label{+29} \\
&l^2(p_{1}+\frac{c_2}{4}{}{-p_{2}}+q_{2})+l^2\bigg(1-\frac{{}{p_{2}}}{4}+\frac{1}{l^3}\bigg)\varepsilon_1+2\sqrt{2l}M_2 <2b_2, \label{+30}\\
&\frac{(c_1+c_2)l^2}{2}+\bigg( (c_2{}{-p_{2}+q_{2}})\frac{l^2}{2}+\frac{1}{2r_{11}}+\sqrt{2l}M_2-a_2\bigg)\varepsilon_1  <0.\label{+31}
\end{align}

Note that by \eqref{8a}, \eqref{1723d} holds with $r_3,r_4,r_8$ small enough and $r_5,r_{12}$ large enough.

Setting $r_1=r_6=\frac{1}{2}$, we get by \eqref{+31},
\begin{align}
&\frac{\varepsilon_1}{2r_{11}}+\frac{c_1l^2}{4}\frac{1}{r_1} +\frac{l^2}{4}\bigg(\frac{c_2}{r_6}+
2\varepsilon_1c_2-\frac{\varepsilon_1{}{p_{2}}}{r_6}+\frac{\varepsilon_1{}{q_{2}}}{r_1}\bigg) +\varepsilon_1\sqrt{l} M_2-\varepsilon_1a_2<0. \label{+32}
\end{align}
For the above $r_{3}$, one may choose $\varepsilon_2 <\frac{1}{K_m}$ small enough such that $\frac{\varepsilon_2}{r_3}$ small enough. Then by \eqref{+32}, \eqref{1723f} holds with small $r_{10}$ and $\frac{\varepsilon_2}{r_3}$.

Similarly, by \eqref{+30}, \eqref{1723e} holds with $r_2=\frac{1}{2}$ and $r_9, \frac{\varepsilon_2}{r_4}$ small enough. By \eqref{+29}, \eqref{1723c} holds with $r_2=\frac{1}{2}$ and $\varepsilon_2r_{12}, \varepsilon_2r_5,r_7$ small enough.





\end{document}